\documentclass[12pt]{article}
\usepackage[utf8]{inputenc}
\usepackage{amsmath}   
\usepackage{mathtools} 

 \usepackage{amsfonts}
\usepackage{amssymb}

\usepackage{amsthm}

\usepackage{mathrsfs}   
\usepackage{bm}         
\usepackage{braket}     
\usepackage{stmaryrd}   

\usepackage{graphicx}
\usepackage[dvipsnames]{xcolor}
\usepackage{float}
\usepackage{caption}
\usepackage{subcaption}

\usepackage{array}
\usepackage{multicol}
\usepackage{nicematrix}
\usepackage{enumitem}
\usepackage{url}
\usepackage{verbatim}   

\usepackage{tikz}
\usetikzlibrary{patterns}
\usetikzlibrary{shapes.geometric, arrows}
\usetikzlibrary{arrows.meta, chains, positioning, quotes}
\usetikzlibrary{calc}

\newcolumntype{P}[1]{>{\centering\arraybackslash}p{#1}}
\newcolumntype{M}[1]{>{\centering\arraybackslash}m{#1}}

\definecolor{ugentblue}{HTML}{1E64C8}
\definecolor{ugentyellow}{HTML}{FFD200}

\theoremstyle{plain}
\newtheorem{thm}{Theorem}[section]
\newtheorem{lem}[thm]{Lemma}
\newtheorem{prop}[thm]{Proposition}
\newtheorem{cor}[thm]{Corollary}

\theoremstyle{definition}
\newtheorem{defn}{Definition}[section]

\theoremstyle{plain}

\newtheorem{ex}{Example}[section]

\newtheorem{rem}[thm]{Remark}

\newcommand{\niceremark}[3]{}
\newcommand{\niceremarkcolor}[4]{}
\newcommand{\aida}[2][]{}
\newcommand{\nichola}[2][]{}

\usepackage[maxbibnames=9,giveninits=true,doi=false,isbn=false]{biblatex}
\usepackage[colorlinks=true,citecolor=blue,linkcolor=blue,urlcolor=blue]{hyperref}
\usepackage{cleveref}

\title{Spectral and Additive Combinatorial Methods for Cycles and Absorbing Sets in Lifted-Product Quantum LDPC Codes}
\author{Aida Abiad\thanks{\texttt{a.abiad.monge@tue.nl}, Department of Mathematics and Computer Science, Eindhoven University of Technology, The Netherlands; Department of Mathematics and Data Science of Vrije Universiteit Brussel, Belgium} \and Nichola Castriota\thanks{\texttt{Nichola.Castriota@UGent.be}, Department of Mathematics: Analysis, Logic and Discrete Mathematics, Ghent University, Belgium}}
\date{}

\begin{document}

\maketitle

\begin{abstract}\noindent
The finite-length performance of quantum low-density parity-check (LDPC) codes under iterative decoding is governed by small substructures of the Tanner graph, principally short cycles and absorbing sets. While the classical theory of these substructures for quasi-cyclic codes is well developed through discrete Fourier transform (DFT) methods, these tools do not directly address the two-block tensor structure $H_X = [\,\widetilde{H}_1 \mid I \otimes \widetilde{B}^T\,]$ of the lifted-product (quasi-cyclic generalised hypergraph product, QC-GHP) codes that dominate current quantum LDPC constructions.

In this paper we develop a quantum-specific spectral framework that exploits this structure. At its core is a DFT block-diagonalisation of $H_X H_X^T$ that reduces moment-trace and cycle computations from an $(r_1\ell)\times(r_1\ell)$ matrix to a sum of $\ell$ small $r_1\times r_1$ Hermitian matrices, with the second block entering only as a scalar shift. From this result we derive a closed-form $4$-cycle count for generalised bicycle codes via additive energies, a joint Sidon characterisation of girth $6$ in the spirit of Fossorier's classical criterion, a Fourier expression for the number of $(3,3)$ elementary absorbing sets in column-weight-$3$ codes via the Wang--Dolecek--Wesel triangle bijection, and a lower bound on stopping-set sizes using the expander mixing lemma.
\par\medskip
\noindent\textbf{Keywords:} quantum LDPC codes, CSS codes, lifted product codes, generalised bicycle codes, Tanner graph, absorbing sets, stopping sets, discrete Fourier transform, additive energy, Sidon sets, expander mixing lemma
\end{abstract}

\section{Introduction}\label{sec:intro}

Error-correcting codes underlie every modern communication and storage system, from cellular networks to deep-space links to the read channels in solid-state drives. In the quantum setting they play an even more fundamental role: physical qubits lose their state on microsecond timescales, so any scalable architecture must encode its information into a quantum error-correcting code able to detect and repair faults faster than they accumulate. \emph{Low-density parity-check (LDPC) codes}, introduced by Gallager~\cite{gallager1962} for the classical setting and later studied in the quantum setting, beginning with the sparse-graph constructions of MacKay, Mitchison, and McFadden~\cite{mackay2004sparse} and notably through the generalised-bicycle construction of Kovalev and Pryadko~\cite{kovalev2013}, are among the leading candidate families for both regimes because they admit fast iterative decoders whose complexity grows only linearly with the block length.

In practice, at the realistic block lengths used in hardware, the iterative decoder of an LDPC code does not meet the asymptotic guarantees promised by coding theory: its failure probability eventually flattens out as the noise decreases, and the configurations responsible for this error floor are small recurring patterns in the bipartite \emph{Tanner graph} associated to the code's parity-check matrix~\cite{richardson2003, dolecek2010}. Two such patterns have repeatedly been singled out. \emph{Short cycles} disrupt the independence assumptions on which iterative decoding rests and cause the decoder to oscillate, and \emph{absorbing sets} are small vertex configurations where erroneous messages reinforce one another and become locally stable; the latter have been recognised as the dominant failure mechanism of iterative decoders across a wide range of channels~\cite{dolecek2010, mcmillon2023}. Morris, Pllaha, and Kelley~\cite{morris2024} have recently shown that absorbing sets also drive decoder failure in Calderbank--Shor--Steane (CSS) quantum LDPC codes, so the same two graphical patterns govern the finite-length performance of classical and quantum LDPC codes alike.

Controlling these patterns turns the engineering question of code design into a combinatorial problem: how many few short cycles can a sparse Tanner graph have, and how small can its absorbing sets be? On the classical side, a considerable amount of work tackles this question through the spectrum of the parity-check matrix. For quasi-cyclic (QC) LDPC codes built from a cyclic lift, the discrete Fourier transform (DFT) block-diagonalises the relevant matrices and reduces cycle counts to small eigenvalue computations: this is the eigenvalue reduction of Smarandache and Flanagan~\cite{SF2009} and the directed-edge-matrix recipe of Karimi and Banihashemi~\cite{karimi2012}, while Fossorier~\cite{fossorier2004} characterised the girth of QC codes built from circulant permutation matrices through exponent-difference conditions and Wang, Dolecek, and Wesel~\cite{wangdolecekwesel2012} gave a bijection between the smallest absorbing sets of column-weight-$3$ codes and triangles in an associated graph. On the quantum side, by contrast, cycle and absorbing-set counts of concrete code families have so far been obtained largely by direct enumeration~\cite{panteleev2021degenerate} rather than through structural spectral tools, and the absorbing-set theory of CSS codes is only beginning to be developed~\cite{morris2024}. A main obstacle is that the leading quantum LDPC constructions, the lifted-product (and quasi-cyclic generalised hypergraph product, QC-GHP) codes of Panteleev and Kalachev~\cite{panteleev2021degenerate}, have an $X$-check matrix of the two-block tensor form $H_X = [\,\widetilde{H}_1 \mid I \otimes \widetilde{B}^T\,]$, which the single-block classical DFT machinery does not directly address.

In this work we address this gap by developing a quantum-specific spectral framework that exploits the two-block tensor form rather than working around it (Theorem~\ref{prop:spectral-trace}). The main tool is a DFT block-diagonalisation of $H_X H_X^T$  that captures the contribution of the second block as a scalar shift, reducing moment-trace and cycle computations on an $(r_1\ell)\times(r_1\ell)$ ambient matrix to a sum of $\ell$ small $r_1\times r_1$ Hermitian matrices. As a consequence of the proposed spectral framework, we obtain, for generalised bicycle (GB) codes, a closed-form count of $4$-cycles in terms of the additive energies of the support sets of the defining polynomials (Proposition~\ref{prop:4cycles}) together with a joint Sidon-type characterisation of girth~$6$ (Corollary~\ref{cor:sidon}) that parallels, in the single-circulant setting, Fossorier's classical exponent-difference criterion~\cite{fossorier2004}; through the triangle bijection of Wang, Dolecek, and Wesel~\cite{wangdolecekwesel2012}, we derive a Fourier expression for the number of $(3,3)$ elementary absorbing sets in column-weight-$3$ QC-GHP codes (Proposition~\ref{prop:33-spectral}), addressing an enumeration question raised by the absorbing-set framework of Morris, Pllaha, and Kelley~\cite{morris2024}; and, applying the bipartite expander mixing lemma through the second-largest singular value of $H_X$ obtained from the same Fourier matrices, we show a spectral lower bound on the stopping distance of a QC-GHP code (Proposition~\ref{prop:stopping-spectral}). All of these expressions are computable in closed form from the polynomial data of the lifted-product construction, and together they specialise and unify the classical single-block spectral methods of~\cite{SF2009, karimi2012, fossorier2004, wangdolecekwesel2012} to the quantum two-block setting.

\section{Preliminaries}\label{sec:prelim}
This section the needed graph-theoretic, coding-theoretic, quantum, and spectral background used throughout the paper.
\subsection{Graph theory background}\label{sec:graph-background}
All graphs are finite and simple. For a graph $G = (V, E)$  with vertex set $V$ and edge set $E$, we write $N(v) = \{u : \{u, v\} \in E\}$ for the neighbourhood of a vertex, $\deg(v) = d_v = |N(v)|$ for its degree, and $N(S) = \bigcup_{v \in S} N(v)$. For $S \subseteq V$, the induced subgraph on $S$ is $G[S] = G_S$; when $D$ is the vertex set of a substructure such as an absorbing set (Definition~\ref{def:absorbing}) we abbreviate $\mathcal{G}[D]$ by $\mathcal{G}_D$. The \emph{girth}$g(G)$ is the length of a shortest cycle (with $g(G) = \infty$ if $G$ is acyclic). A graph is \textit{$d$-regular} if every vertex has degree $d$; a bipartite graph $G = (V_L, V_R, E)$, in which every edge joins $V_L$ to $V_R$, is \textit{$(d_L, d_R)$-biregular} if all left vertices have degree $d_L$ and all right vertices degree $d_R$. A bipartite graph has no odd cycle, so its girth, if finite, is an even integer at least $4$. We refer to \cite{brouwer2011spectra} for standard graph-theoretic background.

\begin{defn}[Adjacency and biadjacency matrices]\label{def:adj}
Let $G$ be a graph with vertex set $V = \{v_1, \ldots, v_n\}$. The \textit{adjacency matrix} of $G$ is the $n \times n$ symmetric $0$-$1$ matrix $A_G$ defined by $(A_G)_{ij} = 1$ iff $\{v_i, v_j\} \in E$. For a bipartite graph $G = (V_L, V_R, E)$ with $|V_L| = m$, $|V_R| = n$, the \textit{biadjacency matrix} $H \in \{0, 1\}^{m \times n}$ has $H_{ij} = 1$ iff the $i$-th left vertex is adjacent to the $j$-th right vertex. The full adjacency matrix is then the block matrix
\[
    A_G \;=\; \begin{pmatrix} 0 & H \\ H^T & 0 \end{pmatrix}.
\]
\end{defn}

\begin{thm}[Walk counts via matrix powers, {\cite{brouwer2011spectra}}]\label{thm:tr-walks}
For any graph $G$ with adjacency matrix $A_G$ and any integer $k \geq 0$, the entry $(A_G^k)_{uv}$ counts walks of length $k$ from vertex $u$ to vertex $v$ in $G$. Consequently,
\[
    \operatorname{tr}(A_G^k) \;=\; \#\{\text{closed walks of length $k$ in $G$}\}.
\]
\end{thm}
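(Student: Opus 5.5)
The plan is to argue by induction on $k$, working directly from the definition of matrix multiplication. A walk of length $k$ from $u$ to $v$ is a sequence $u = w_0, w_1, \ldots, w_k = v$ of vertices with $\{w_{i-1}, w_i\} \in E$ for each $i$. For the base case $k=0$ we have $A_G^0 = I$, and the only walk of length $0$ is the trivial walk from $u$ to itself. So $(A_G^0)_{uv} = \delta_{uv}$ counts these correctly. (The case $k=1$ also checks immediately: $(A_G)_{uv} = 1$ exactly when $\{u,v\} \in E$, which is exactly when a one-edge walk exists.)

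For the inductive step, we write $A_G^{k} = A_G^{k-1} A_G$, which gives
\[
(A_G^{k})_{uv} \;=\; \sum_{w \in V} (A_G^{k-1})_{uw}\,(A_G)_{wv}.
\]
We then partition the walks of length $k$ from $u$ to $v$ according to their penultimate vertex $w = w_{k-1}$. Deleting the last step gives a bijection between
\begin{itemize}
\item walks of length $k$ from $u$ to $v$ with penultimate vertex $w$, and
\item walks of length $k-1$ from $u$ to $w$, provided $\{w,v\} \in E$.
\end{itemize}
When $\{w,v\} \notin E$ the first set is empty. Hence the number of such walks is $(A_G^{k-1})_{uw} \cdot (A_G)_{wv}$, using the induction hypothesis together with the fact that $(A_G)_{wv} \in \{0,1\}$ is the indicator of $\{w,v\} \in E$. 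Summing over $w$ then yields the claimed count.

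For the consequence, we use $\operatorname{tr}(A_G^k) = \sum_{u} (A_G^k)_{uu}$. By the first part, this sums, over all starting vertices $u$, the number of walks of length $k$ from $u$ back to $u$. This is precisely the number of closed walks of length $k$, where a closed walk is counted with its distinguished starting vertex (and direction). We will state this convention explicitly, since the later cycle counts in the paper must divide by the resulting multiplicities: $2k$ for a $k$-cycle.

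There is no real obstacle here. The only point needing care is this bookkeeping convention, namely that closed walks are ordered sequences with a chosen base point, so that the trace identity is an exact equality rather than an identity up to symmetry factors.
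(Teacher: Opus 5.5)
Your proof is correct. The induction on $k$ via the penultimate vertex, together with your based, directed convention for closed walks, is the standard argument; equivalently, expand $(A_G^k)_{uv}=\sum_{w_1,\ldots,w_{k-1}}A_{uw_1}A_{w_1w_2}\cdots A_{w_{k-1}v}$ and note each term is the indicator that $u,w_1,\ldots,w_{k-1},v$ is a walk. The paper gives no proof of its own here and simply cites Brouwer--Haemers, so there is nothing to compare against beyond that standard reasoning.
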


The bipartite specialisation of Theorem~\ref{thm:tr-walks} through the biadjacency matrix is stated as Lemma~\ref{lem:bipartite-trace}.

\subsection{Coding theory background}\label{sec:coding-background}
Let $\mathbb{F}_2 = \{0, 1\}$. For $x \in \mathbb{F}_2^n$, the \textit{support} is $\mathrm{supp}(x) = \{i : x_i = 1\}$ and the \textit{Hamming weight} is $\mathrm{wt}(x) = |\mathrm{supp}(x)|$. A \textit{(binary) linear code} $\mathcal{C} \subseteq \mathbb{F}_2^n$ of dimension $k$ (parameters $[n, k]$, rate $k/n$) is the kernel $\{x \in \mathbb{F}_2^n : Hx = 0\}$ of a \textit{parity-check matrix} $H \in \mathbb{F}_2^{m \times n}$ (with $m \geq n - k$); its \textit{minimum distance} $d = \min\{\mathrm{wt}(x) : x \in \mathcal{C},\, x \neq 0\}$ equals the smallest size of a nonzero codeword support, and we write the parameters as $[n, k, d]$. The definition extends verbatim over any finite field $\mathbb{F}_q$; all codes here are binary (in the quantum case via the symplectic representation of Pauli operators, see \S\ref{subsec:css}).

\begin{defn}[LDPC code]\label{def:ldpc}
A \textit{low-density parity-check (LDPC) code} is a linear code that admits a sparse parity-check matrix~$H$, meaning each row and each column of $H$ has a small (constant or slowly-growing) number of nonzero entries relative to the block length $n$.
\end{defn}

A parity-check matrix has \textit{column weight}~$\gamma$ (resp.\ \textit{row weight}~$k$) if every column (resp.\ row) contains exactly $\gamma$ (resp.\ $k$) ones; the code is then \textit{$(\gamma, k)$-regular}. Sparsity is asymptotic and has no sharp threshold at finite block length.

\begin{defn}[Tanner graph, \cite{1056404}]\label{def:tanner}
Given a parity check matrix $H$ of a linear code $\mathcal{C}$, its bipartite \textit{Tanner graph representation} is the graph $\mathcal{G} = (V,W; E)$
where the vertex sets $V$ and $W$ correspond to the codeword coordinates and the parity check equations, and are called \textit{variable} and \textit{check} nodes, respectively. $E$ is the set of edges. For $v_i \in V$ and $c_j \in W$, the edge $(v_i,c_j)\in E$ if and only if $h_{j,i} = 1$ in $H$.
\end{defn}

A code does not have a unique Tanner graph, as $\mathcal{G}$ depends on the choice of parity-check matrix. Being bipartite, $\mathcal{G}$ has even girth ($\geq 4$) whenever finite. A code's parameters are reflected in $\mathcal{G}$: $n = |V|$, $m = |W| \geq n - k$ (with equality when the rows of $H$ are independent), and $d$ equals the smallest size of a nonempty set $S \subseteq V$ such that every check in $N(S)$ has even degree in $\mathcal{G}[S \cup N(S)]$.

\subsubsection{Critical substructures: stopping, trapping, and absorbing sets}\label{subsec:critical-substructures}

Throughout this subsection, $\mathcal{G} = (V, W; E)$ is the Tanner graph of a binary linear code $\mathcal{C}$ with parity-check matrix $H$, and degrees are taken in $\mathcal{G}$. All definitions are purely graph-theoretic and extend verbatim to non-binary linear codes.

\begin{defn}[Stopping set, \cite{di2002}]\label{def:stopping}
A \textit{stopping set} $S$ in a Tanner graph $\mathcal{G}=(V,W;E)$ is a subset $S\subseteq V$ of variable nodes such that every check node in the neighborhood $N(S)$ is connected to $S$ by at least two edges. Equivalently, the subgraph induced by $S\cup N(S)$ contains no check node of degree one.
\end{defn}

On the binary erasure channel, iterative decoding fails to recover the codeword iff the set of erased positions contains a stopping set~\cite{di2002, schwartz2006}; the \textit{stopping distance} is the size of the smallest non-empty stopping set. See Figure~\ref{fig:substructures-summary}(a).

\begin{defn}[$(a,b)$-trapping set, \cite{richardson2003}]\label{def:trapping}
An \textit{$(a,b)$-trapping set} $T$ in a Tanner graph $\mathcal{G}$ is a subset of $a$ variable nodes whose induced subgraph $\mathcal{G}_T$ contains exactly $b$ odd-degree check nodes.
\end{defn}

Figure~\ref{fig:substructures-summary}(b) shows a $(2,2)$-trapping set $T=\{v_1,v_2\}$ with its two odd-degree checks drawn in red.

\begin{defn}[Elementary, leafless, and non-elementary trapping sets, \cite{dehghan2019}]\label{def:elementary-ts}
A trapping set is called \textit{elementary} (ETS) if every check node in its induced subgraph has degree one or two. An elementary trapping set is \textit{leafless} (LETS) if, additionally, every variable node is adjacent to at least two degree-two (satisfied) check nodes. An ETS that is not leafless is called an \textit{ETS with leaf} (ETSL). A trapping set that is not elementary is called a \textit{non-elementary trapping set} (NETS).
\end{defn}

A related global construction is the \emph{variable-node graph}, attached to the Tanner graph itself rather than to one of its substructures.

\begin{defn}[Variable-node graph]\label{def:vng}
The \emph{variable-node graph} of a Tanner graph $\mathcal{G} = (V, W; E)$, denoted $G^{VN}$, is the simple graph on the variable set $V$ in which $u, v \in V$ are adjacent iff there exists a check $c \in W$ with $\{u, c\}, \{v, c\} \in E$. When $g(\mathcal{G}) \geq 6$, any such pair $\{u, v\}$ shares at most one common check, so $G^{VN}$ has no multi-edges.
\end{defn}

Absorbing sets refine trapping sets by adding a local stability condition: every variable node in the set has strictly more satisfied (even-degree) than unsatisfied (odd-degree) check neighbours in the induced subgraph~\cite{5174520, 5361488}. This local majority is what makes absorbing sets ``absorb'' the decoder~\cite{dolecek2010}.

\begin{defn}[$(a,b)$-absorbing set, \cite{5174520, 5361488}]\label{def:absorbing}
Let $\mathcal{G}$ be a Tanner graph and $\mathcal{A} \subseteq V$ a subset of $a$ variable nodes. For each variable $v \in \mathcal{A}$, write $e_v$ (resp.\ $o_v$) for the number of check neighbours of $v$ in $\mathcal{G}_{\mathcal{A}}$ whose degree in $\mathcal{G}_{\mathcal{A}}$ is even (resp.\ odd). The set $\mathcal{A}$ is an \emph{$(a, b)$-absorbing set} if exactly $b$ check nodes in $\mathcal{G}_{\mathcal{A}}$ have odd degree and $e_v > o_v$ for every $v \in \mathcal{A}$. An absorbing set is \emph{elementary} if every check node in $\mathcal{G}_{\mathcal{A}}$ has degree $1$ or $2$ (equivalently, the underlying $(a,b)$-trapping set is an ETS in the sense of Definition~\ref{def:elementary-ts}).
\end{defn}

\begin{figure}[ht]
\centering
\tikzset{
  varN/.style={circle, draw=black, fill=white, minimum size=8pt, inner sep=0pt},
  varIn/.style={circle, draw=black, fill=ugentblue, minimum size=8pt, inner sep=0pt},
  chkN/.style={regular polygon, regular polygon sides=4, draw=black, fill=white, minimum size=8pt, inner sep=0pt},
  chkOdd/.style={regular polygon, regular polygon sides=4, draw=red, fill=red!25, minimum size=10pt, inner sep=0pt, very thick},
  hilEdge/.style={thick, ugentblue},
  oddEdge/.style={thick, red},
  skelEdge/.style={black!30}
}
\begin{subfigure}[t]{0.32\textwidth}
\centering
\begin{tikzpicture}[thick, scale=0.7, every node/.style={transform shape}]
\node[varIn, label=left:{\scriptsize $v_1$}] (v1) at (0, 2.4) {};
\node[varIn, label=left:{\scriptsize $v_2$}] (v2) at (0, 0.8) {};
\node[varIn, label=left:{\scriptsize $v_3$}] (v3) at (0,-0.8) {};
\node[varN,  label=left:{\scriptsize $v_4$}] (v4) at (0,-2.4) {};
\node[chkN, label=right:{\scriptsize $c_1$}] (c1) at (3, 2.4) {};
\node[chkN, label=right:{\scriptsize $c_2$}] (c2) at (3, 0.8) {};
\node[chkN, label=right:{\scriptsize $c_3$}] (c3) at (3,-0.8) {};
\node[chkN, label=right:{\scriptsize $c_4$}] (c4) at (3,-2.4) {};
\draw[hilEdge] (v1)--(c1); \draw[hilEdge] (v1)--(c2); \draw[hilEdge] (v1)--(c3);
\draw[hilEdge] (v2)--(c1); \draw[hilEdge] (v2)--(c2); \draw[hilEdge] (v2)--(c4);
\draw[hilEdge] (v3)--(c1); \draw[hilEdge] (v3)--(c3); \draw[hilEdge] (v3)--(c4);
\draw[skelEdge] (v4)--(c2); \draw[skelEdge] (v4)--(c3); \draw[skelEdge] (v4)--(c4);
\end{tikzpicture}
\caption{Stopping set $S=\{v_1,v_2,v_3\}$: every check in $N(S)$ has $\geq 2$ neighbours in~$S$.}
\end{subfigure}
\hfill
\begin{subfigure}[t]{0.32\textwidth}
\centering
\begin{tikzpicture}[thick, scale=0.7, every node/.style={transform shape}]
\node[varIn, label=left:{\scriptsize $v_1$}] (v1) at (0, 2.4) {};
\node[varIn, label=left:{\scriptsize $v_2$}] (v2) at (0, 0.8) {};
\node[varN,  label=left:{\scriptsize $v_3$}] (v3) at (0,-0.8) {};
\node[varN,  label=left:{\scriptsize $v_4$}] (v4) at (0,-2.4) {};
\node[chkN,   label=right:{\scriptsize $c_1$}] (c1) at (3, 2.4) {};
\node[chkN,   label=right:{\scriptsize $c_2$}] (c2) at (3, 0.8) {};
\node[chkOdd, label=right:{\scriptsize $c_3$}] (c3) at (3,-0.8) {};
\node[chkOdd, label=right:{\scriptsize $c_4$}] (c4) at (3,-2.4) {};
\draw[hilEdge] (v1)--(c1); \draw[hilEdge] (v1)--(c2); \draw[oddEdge] (v1)--(c3);
\draw[hilEdge] (v2)--(c1); \draw[hilEdge] (v2)--(c2); \draw[oddEdge] (v2)--(c4);
\draw[skelEdge] (v3)--(c1); \draw[skelEdge] (v3)--(c3); \draw[skelEdge] (v3)--(c4);
\draw[skelEdge] (v4)--(c2); \draw[skelEdge] (v4)--(c3); \draw[skelEdge] (v4)--(c4);
\end{tikzpicture}
\caption{$(2,2)$-trapping and absorbing set $T=\{v_1,v_2\}$: $c_3,c_4$ are odd-degree in $\mathcal{G}_T$.}
\end{subfigure}
\hfill
\begin{subfigure}[t]{0.32\textwidth}
\centering
\begin{tikzpicture}[thick, scale=0.7, every node/.style={transform shape}]
\node[varIn, label=left:{\scriptsize $v_1$}] (v1) at (0, 2.4) {};
\node[varIn, label=left:{\scriptsize $v_2$}] (v2) at (0, 0.8) {};
\node[varIn, label=left:{\scriptsize $v_3$}] (v3) at (0,-0.8) {};
\node[varIn, label=left:{\scriptsize $v_4$}] (v4) at (0,-2.4) {};
\node[chkOdd, label=right:{\scriptsize $c_1$}] (c1) at (3, 2.4) {};
\node[chkOdd, label=right:{\scriptsize $c_2$}] (c2) at (3, 0.8) {};
\node[chkOdd, label=right:{\scriptsize $c_3$}] (c3) at (3,-0.8) {};
\node[chkOdd, label=right:{\scriptsize $c_4$}] (c4) at (3,-2.4) {};
\draw[oddEdge] (v1)--(c1); \draw[oddEdge] (v1)--(c2); \draw[oddEdge] (v1)--(c3);
\draw[oddEdge] (v2)--(c1); \draw[oddEdge] (v2)--(c2); \draw[oddEdge] (v2)--(c4);
\draw[oddEdge] (v3)--(c1); \draw[oddEdge] (v3)--(c3); \draw[oddEdge] (v3)--(c4);
\draw[oddEdge] (v4)--(c2); \draw[oddEdge] (v4)--(c3); \draw[oddEdge] (v4)--(c4);
\end{tikzpicture}
\caption{$(4,4)$-trapping set $\{v_1,\dots,v_4\}$ that is \emph{not} absorbing: each $v_i$ has $e_{v_i}=0<3=o_{v_i}$.}
\end{subfigure}
\caption{Three substructures on the same $(3,3)$-regular Tanner graph. Variables in the highlighted set are filled blue; check nodes with odd induced degree are drawn in red. Edges of the induced subgraph $\mathcal{G}_S$ (resp.\ $\mathcal{G}_T$, $\mathcal{G}_{\mathcal{A}}$) are coloured; edges incident to excluded variables are faded. The substructure classes form a strict hierarchy: $(a)$ is also a $(3,1)$-trapping and $(3,1)$-absorbing set; $(b)$ is trapping and absorbing but not stopping (because $c_3,c_4$ have induced degree~$1$); $(c)$ is trapping but neither stopping nor absorbing.}
\label{fig:substructures-summary}
\end{figure}

\subsection{Quantum LDPC codes}\label{sec:quantum-background}
Quantum error-correcting codes encode $k$ logical qubits into $n$ physical qubits, embedding the logical state into a structured subspace of $(\mathbb{C}^2)^{\otimes n}$; see Breuckmann and Eberhardt~\cite{breuckmann2021quantumldpcsurvey} for a survey. We work with the \textit{stabilizer codes} of Gottesman~\cite{gottesman1997} and Calderbank-Rains-Shor-Sloane~\cite{CRSS1997}, and specifically with the Calderbank-Shor-Steane (CSS) subclass, whose stabilizer factors into pure-$X$ and pure-$Z$ generators. In the $n$-qubit Pauli group any two elements commute or anticommute, and commutation is governed by the \textit{symplectic inner product} on $\mathbb{F}_2^n$~\cite{CRSS1997, gottesman1997}; this is the structural reason CSS codes admit a binary-matrix description. We focus on the \textit{QLDPC} subclass (Definition~\ref{def:qldpc}), whose running example is the \textit{quasi-cyclic generalized hypergraph product} (QC-GHP) family of~\cite{panteleev2021degenerate}.

\subsubsection{CSS codes}\label{subsec:css}
A \textit{stabilizer code} is the joint $+1$ eigenspace of an abelian subgroup $\mathcal{S} \subseteq \mathcal{P}_n$ of the Pauli group~\cite{gottesman1997, CRSS1997}; we work directly with the matrix description of the stabilizer.

\begin{defn}[CSS code, \cite{CRSS1997, gottesman1997}]\label{def:css}
A \textit{Calderbank-Shor-Steane (CSS) code} is a stabilizer code whose stabilizer admits a set of generators that are either pure-$X$ (tensor products of $I$ and $X$ only) or pure-$Z$. When the pure-$X$ generators are specified by a matrix $H_X \in \mathbb{F}_2^{m_X \times n}$ (each row encodes the support of one generator) and the pure-$Z$ generators by a matrix $H_Z \in \mathbb{F}_2^{m_Z \times n}$, the commutation requirement on the stabilizer translates to the orthogonality condition
\begin{equation}\label{eq:css-commutation}
    H_X H_Z^{\mkern1mu T} \;=\; 0 \qquad \text{(over } \mathbb{F}_2\text{).}
\end{equation}
The number of logical qubits encoded by the CSS code is
\[
    k \;=\; n - \mathrm{rank}_{\mathbb{F}_2}(H_X) - \mathrm{rank}_{\mathbb{F}_2}(H_Z),
\]
and the \textit{minimum distance} $d$ is the smallest weight of a Pauli operator that commutes with every stabilizer generator but does not lie in $\mathcal{S}$. The code is then said to have parameters $[\![n, k, d]\!]$, where the double brackets distinguish quantum from classical parameters.
\end{defn}

Equivalently, the row spaces of $H_X$ and $H_Z$ are mutually orthogonal, so the classical codes $\mathcal{C}_X = \ker H_X$ and $\mathcal{C}_Z = \ker H_Z$ form a nested pair $\mathcal{C}_X^\perp \subseteq \mathcal{C}_Z$, and the CSS construction recovers a quantum code from any such pair.

\subsubsection{QLDPC codes}\label{subsec:qldpc}
\begin{defn}[QLDPC code]\label{def:qldpc}
A \textit{quantum low-density parity-check (QLDPC) code} is a CSS code in which both parity-check matrices $H_X$ and $H_Z$ are sparse in the sense of Definition~\ref{def:ldpc}.
\end{defn}

QLDPC codes~\cite{mackay2004sparse, kovalev2013} have become a central object in fault-tolerant quantum computing, with asymptotically good constructions due to Panteleev-Kalachev~\cite{panteleev2022asymptotically} and Leverrier-Z{\'e}mor~\cite{leverrier2022quantum}. As classically, sparsity is asymptotic; in structured families the column and row weights of $H_X, H_Z$ are fixed constants. Each CSS code carries two Tanner graphs, $\mathcal{G}_X$ from $H_X$ and $\mathcal{G}_Z$ from $H_Z$ (Definition~\ref{def:tanner}); these are the principal objects on which the decoding analyses in this work take place.

\subsubsection{Syndrome-based decoding}\label{subsec:syndrome}
When a Pauli error $E = X^{e_X} Z^{e_Z}$ corrupts the encoded state, measuring the $X$-type generators yields the \textit{$X$-syndrome} $H_X e_Z \in \mathbb{F}_2^{m_X}$ and measuring the $Z$-type generators yields the \textit{$Z$-syndrome} $H_Z e_X$. \textit{Syndrome-based decoding} produces low-weight estimates $\hat e_X, \hat e_Z$ consistent with the observed syndromes~\cite{raveendran2019}, run block-by-block with one iterative message-passing decoder per Tanner graph~\cite{raveendran2019, raveendran2021}. Decoding succeeds when $\hat e_X - e_X \in \mathrm{rowspace}(H_X)$ (a \textit{degenerate error} acting trivially on the codespace) and produces a \textit{logical error} otherwise.

The graph-theoretic substructures responsible for these failures are the quantum analogues of the stopping, trapping and absorbing sets of \S\ref{subsec:critical-substructures}, taken up in Section~\ref{sec:background}. The verbatim graph-theoretic definitions (Definitions~\ref{def:stopping}, \ref{def:trapping}, \ref{def:absorbing}) carry over unchanged, since $\mathcal{G}_X$ and $\mathcal{G}_Z$ are themselves classical bipartite graphs; what differs is that the decoder input is a measured syndrome rather than a corrupted codeword, so the substructures are characterised through the syndrome-matching analysis of Morris-Pllaha-Kelley~\cite{morris2024}. In particular the \textit{$b = 0$} case acquires a genuinely quantum interpretation as a symmetric-stabilizer configuration on which iterative decoding oscillates rather than diverges~\cite{raveendran2021, morris2024}.

\subsection{Spectral and Fourier background}\label{sec:spectral-background}
We provide the discrete Fourier and circulant-matrix tools that will be used in the proposed spectral cycle-counting framework. Throughout, $\ell \geq 1$ is a positive integer and $\omega = e^{2\pi i / \ell}$ is a primitive $\ell$-th root of unity.

\subsubsection{Discrete Fourier transform on $\mathbb{Z}_\ell$}\label{subsec:dft}
The \textit{characters} of $\mathbb{Z}_\ell$ are $\chi_j(k) = \omega^{jk}$, and the \textit{discrete Fourier transform} (DFT) of $f: \mathbb{Z}_\ell \to \mathbb{C}$ is $\hat f(j) = \sum_{k=0}^{\ell-1} f(k)\,\omega^{-jk}$, with inverse $f(k) = \tfrac{1}{\ell}\sum_j \hat f(j)\,\omega^{jk}$~\cite[Ch.~1]{davis1979circulants}. The \textit{Parseval (energy) identity}~\cite[\S 3.2]{davis1979circulants} reads
\begin{equation}\label{eq:parseval}
    \sum_{k = 0}^{\ell - 1} |f(k)|^2 \;=\; \frac{1}{\ell}\sum_{j = 0}^{\ell - 1} |\hat f(j)|^2.
\end{equation}
We apply~\eqref{eq:parseval} to the symbol polynomials of circulant generator matrices, converting a time-domain sum into a sum over frequency-domain eigenvalues.

\subsubsection{Circulant matrices and block-diagonalization}\label{subsec:circulants}
A matrix $C \in \mathbb{C}^{\ell \times \ell}$ is \textit{circulant} if $C_{ij} = c_{(i - j) \bmod \ell}$ for some \textit{symbol} $c \in \mathbb{C}^\ell$; equivalently $C = \sum_{k=0}^{\ell-1} c_k\,S^k$ where $S$ is the cyclic shift matrix~\cite[\S 2.1]{davis1979circulants}. The Fourier matrix $F$ with entries $F_{jk} = \omega^{jk} / \sqrt{\ell}$ is unitary and simultaneously diagonalises every circulant~\cite[Thm.~3.2.1]{davis1979circulants}:
\begin{equation}\label{eq:circulant-diag}
    F\, C\, F^* \;=\; \mathrm{diag}\bigl(\hat c(0), \hat c(1), \ldots, \hat c(\ell - 1)\bigr),
\end{equation}
where $\hat c(j) = \sum_k c_k \omega^{jk}$; the same conjugation blockwise-diagonalises any block matrix of $\ell \times \ell$ circulants (replace $F$ by $F \otimes I_r$ for blocks of size $r$). Quasi-cyclic Tanner graphs, including the QC-GHP family of Example~\ref{ex:ghp}, have circulant-block biadjacency matrices, and~\eqref{eq:circulant-diag} reduces their global cycle-counting problems to a sum over $\ell$ smaller matrices $\{M_j\}_{j=0}^{\ell - 1}$, the \textit{Fourier matrices} of the protograph: this is the structural basis of Theorem~\ref{prop:spectral-trace}.

\subsubsection{Cycle counts from spectral traces}\label{subsec:trace-cycle-counts}
Theorem~\ref{thm:tr-walks} specialises to bipartite graphs through the biadjacency matrix: the powers of $HH^T$ count closed walks that begin and end on the same side.

\begin{lem}[Side-restricted trace identity; bipartite specialisation of Theorem~\ref{thm:tr-walks}, {\cite{brouwer2011spectra}}]\label{lem:bipartite-trace}
Let $G = (V_L, V_R, E)$ be a bipartite graph with biadjacency matrix $H \in \{0,1\}^{|V_L| \times |V_R|}$. For every integer $k \geq 1$ and every $u \in V_L$, the diagonal entry $\bigl((HH^T)^k\bigr)_{uu}$ equals the number of closed walks of length $2k$ in $G$ that begin and end at $u$. Summing over $u \in V_L$,
\[
    \operatorname{tr}\!\bigl((HH^T)^k\bigr) \;=\; \#\{\text{closed walks of length $2k$ in $G$ starting in $V_L$}\}.
\]
The analogous identity holds for $(H^T H)^k$ and walks starting in $V_R$; by the cyclic invariance of trace the two counts are equal.
\end{lem}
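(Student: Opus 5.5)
The identity will follow from the matrix-product formula for powers: we expand $\bigl((HH^T)^k\bigr)_{uu}$ entrywise and interpret each nonzero term as a walk. The argument uses only that $H$ is a $0$-$1$ matrix whose entries record adjacency between $V_L$ and $V_R$ (Definition~\ref{def:adj}).

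First, for $u, u' \in V_L$ we have
\[
(HH^T)_{uu'} = \sum_{w \in V_R} H_{uw}H_{u'w}.
\]
This counts the vertices $w \in V_R$ adjacent to both $u$ and $u'$, that is, the walks $u \to w \to u'$ of length $2$ in $G$. Iterating the product,
\[
\bigl((HH^T)^k\bigr)_{uu} = \sum H_{u w_1}H_{u_1 w_1}H_{u_1 w_2}H_{u_2 w_2}\cdots H_{u_{k-1} w_k}H_{u w_k},
\]
where the sum runs over all $u_1,\dots,u_{k-1} \in V_L$ and $w_1,\dots,w_k \in V_R$. Each summand is $0$ or $1$. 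It equals $1$ exactly when $u, w_1, u_1, w_2, \dots, u_{k-1}, w_k, u$ is a closed walk of length $2k$.

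Because $G$ is bipartite, every closed walk of length $2k$ starting at $u \in V_L$ alternates sides. So it has exactly this shape, and distinct index tuples correspond to distinct walks. This bijection proves the diagonal statement. Summing over $u \in V_L$ gives the trace formula.

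The statement for $(H^TH)^k$ follows by the same argument with the roles of $V_L$ and $V_R$ exchanged. For equality of the two counts, cyclic invariance gives
\[
\operatorname{tr}\bigl((HH^T)^k\bigr) = \operatorname{tr}\bigl(H(H^TH)^{k-1}H^T\bigr) = \operatorname{tr}\bigl((H^TH)^k\bigr).
\]
Alternatively, we could use Theorem~\ref{thm:tr-walks} directly: $A_G^{2k} = \mathrm{diag}\bigl((HH^T)^k, (H^TH)^k\bigr)$, and the two diagonal blocks count walks starting on each side.

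The only step needing care is the bijection between nonzero terms and walks. The point is that bipartiteness forces each walk to alternate sides, so no walks are missed or double-counted. There is no substantive obstacle beyond this.
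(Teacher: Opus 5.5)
Your proof is correct, but your main argument takes a more hands-on route than the paper. You expand $\bigl((HH^T)^k\bigr)_{uu}$ entrywise into products of $0$-$1$ entries. You then use bipartiteness to match the nonzero terms bijectively with alternating closed walks $u, w_1, u_1, \dots, w_k, u$, so in effect you re-prove the walk-count theorem directly for $HH^T$.

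The paper instead uses exactly what you list as your alternative. It observes that $A_G^2 = \mathrm{diag}(HH^T, H^TH)$, so $(HH^T)^k$ is the upper-left block of $A_G^{2k}$. It then reads $\bigl((HH^T)^k\bigr)_{uu} = (A_G^{2k})_{uu}$ as the closed-walk count given by Theorem~\ref{thm:tr-walks}. In that route bipartiteness enters only through the vanishing off-diagonal blocks. No alternation argument is needed, because the general theorem already counts every closed walk at $u$.

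The paper's route is shorter and reuses the cited theorem. Yours is self-contained and makes the alternating variable/check structure of each walk explicit, which is the viewpoint used when walk counts are converted into cycle counts. Your identity $\operatorname{tr}\bigl((HH^T)^k\bigr) = \operatorname{tr}\bigl(H(H^TH)^{k-1}H^T\bigr)$ also spells out the cyclic-invariance step that the paper only asserts.
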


\begin{proof}
From Definition~\ref{def:adj} we have $$A_G^{2} = \begin{pmatrix} HH^T & 0 \\ 0 & H^T H \end{pmatrix},$$ so $A_G^{2k}$ is block-diagonal with blocks $(HH^T)^k$ and $(H^T H)^k$. Therefore $\bigl((HH^T)^k\bigr)_{uu} = \bigl(A_G^{2k}\bigr)_{uu}$ for $u \in V_L$, and Theorem~\ref{thm:tr-walks} identifies the right-hand side as the number of closed walks of length $2k$ at $u$. The remaining claims follow by summation and from $\operatorname{tr}\bigl((HH^T)^k\bigr) = \operatorname{tr}\bigl((H^T H)^k\bigr)$.
\end{proof}

Closed-walk counts overcount cycles because they include back-tracking steps and tree-shaped excursions; subtracting these via M\"obius-type corrections converts $\operatorname{tr}\bigl((HH^T)^k\bigr)$ into a closed-form count of \textit{cycles} of length $2k$. The precise $4$-cycle formula for quasi-cyclic Tanner graphs, expressed as a sum $\sum_{j=0}^{\ell-1} \operatorname{tr}\bigl((M_j M_j^*)^2\bigr)$ over the Fourier matrices of~\eqref{eq:circulant-diag}, is derived in Theorem~\ref{prop:spectral-trace}; classical cycle-counting precursors appear in~\cite{karimi2012, SF2009}.

\section{Lifted-product QLDPC codes and the absorbing-set framework}\label{sec:background}

Quantum LDPC (QLDPC) codes are among the leading current candidates for scalable fault-tolerant quantum computation~\cite{gottesman2014, panteleev2022asymptotically, leverrier2022quantum}. As in the classical case, their finite-length performance is determined not by asymptotics but by a small number of stable graphical configurations in the Tanner graph that cause persistent decoder failure~\cite{richardson2003, 5361488}, and these critical substructures are far less understood in the quantum setting~\cite{chytas2025collective, stambler2023addressing, morris2024, 7350103} than in the classical one~\cite{dehghan2019, mcmillon2023, AHK2019, 1023274, orlitsky2005stopping}. A systematic \emph{spectral} graph-theoretic approach to QLDPC Tanner graphs, parallel to the adjacency-spectrum cycle counts used extensively for classical LDPC codes~\cite{dehghan2018, karimi2012, blake2011LDPC}, has not yet appeared: the CSS commutation constraint and the lifted-product block structure pose additional obstructions. This work takes a first step via a discrete Fourier transform (DFT)-based cycle-counting framework for lifted-product Tanner graphs.

\subsection{QLDPC codes and the absorbing-set framework}\label{subsec:quantum}

 Both structural threads to date concern \emph{syndrome-based iterative decoding} (\S\ref{subsec:syndrome}), the natural decoding model for stabilizer codes~\cite{raveendran2019}. Raveendran and Vasi\'c~\cite{raveendran2021} identify \emph{symmetric stabilizers}, a genuinely quantum failure mode in which decoder degeneracy lets belief propagation oscillate between equivalent error estimates rather than converge. Morris, Pllaha, and Kelley~\cite{morris2024} show that \emph{absorbing sets} (Definition~\ref{def:absorbing}) are the dominant failure mechanism for syndrome-based iterative decoders: any $(a,b)$-absorbing set with $b\ge 1$ is failure-inducing (its syndrome estimate stays $\bm{0}$, permanently mismatching the input syndrome at the $b$ odd-degree checks), while the $b=0$ case subsumes the symmetric-stabilizer phenomenon and governs the degenerate-versus-logical-error dichotomy~\cite{morris2024}. For CSS hypergraph-product codes~\cite{tillich2014} the absorbing sets are completely characterised by the base matrices $H_1$, $H_2$ and their transposes, so small base-code absorbing sets propagate to low-weight failure-inducing sets~\cite{morris2024}.

Absorbing sets thus give a single combinatorial object subsuming the symmetric-stabilizer formalism of~\cite{raveendran2021}, which is why subsequent QLDPC structural analyses~\cite{stambler2023addressing, chytas2025collective, morris2024} have converged on them. Moreover, the classical Hoory-type cage and girth lower bounds on absorbing-set sizes~\cite{hoory2002, mcmillon2023} do not transfer to the QLDPC two-block form $H_X = [\widetilde{H}_1 \mid I_{r_1} \otimes \widetilde{B}^T]$, because the CSS commutation constraint $H_X H_Z^T = 0$ forces girth at most~$6$ in any fully connected quasi-cyclic construction with column weight at least~$3$~\cite{AmirzadePanarioSadeghi2024}, foreclosing the high-girth regime in which Hoory-type counting gives strong bounds.

\subsection{Lifted-product QLDPC codes}\label{sec:liftedproduct}

We review the lifted-product construction of Panteleev and Kalachev~\cite{panteleev2021degenerate} that produces the running QC-GHP example and the two-block form $H_X = [\widetilde{H}_1 \mid I_{r_1} \otimes \widetilde{B}^T]$ above.

\begin{defn}\cite{panteleev2021degenerate}\label{def:liftedproduct} Consider $H_1 \in \mathbb{F}_2^{r_1 \times n_1}$, $H_2 \in \mathbb{F}_2^{r_2 \times n_2}$, and $G$ a finite abelian group of size $L$. Each $g \in G$ is associated with an $L \times L$ permutation matrix $P_g$ over $\mathbb{F}_2$ using the regular representation, defined by $(P_g)_{h,k}=1$ if $h=gk$; then $P_g P_{g'} = P_{gg'}$ and $P_{g^{-1}} = P_g^T$.

Given a labeling of the nonzero entries of $H_1$ and $H_2$ by elements of $G$, we regard $H_1$ and $H_2$ as matrices over the group algebra $R = \mathbb{F}_2[G]$. For a matrix $M$ over $R$, write $\widetilde{M}$ for its binary \textit{lift}, obtained by replacing each entry $\sum_g a_g\, g$ with the $L \times L$ matrix $\sum_g a_g P_g$ (so $\tilde{H}_1 \in \mathbb{F}_2^{(r_1L) \times (n_1L)}$), and $M^*$ for its conjugate transpose, $(M^*)_{ij} = (M_{ji})^*$ with $\bigl(\sum_g a_g\, g\bigr)^* = \sum_g a_g\, g^{-1}$; on lifts, $\widetilde{M^*} = \widetilde{M}^{\,T}$. The \textit{lifted product quantum CSS code} is defined by its parity check matrices
$$H_X = \bigl[\, \widetilde{H_1 \otimes I_{n_2}} \;\;\big|\;\; \widetilde{I_{r_1} \otimes H_2^*} \,\bigr] \qquad \text{and} \qquad H_Z = \bigl[\, \widetilde{I_{n_1} \otimes H_2} \;\;\big|\;\; \widetilde{H_1^* \otimes I_{r_2}} \,\bigr],$$
where the Kronecker products are taken over $R$; the commutation condition~\eqref{eq:css-commutation} holds because $H_X H_Z^T$ is the lift of $H_1 \otimes H_2^* + H_1 \otimes H_2^* = 0$ over $R$. (For nonabelian $G$ the second factor must instead act through the right regular representation, see~\cite{panteleev2022asymptotically}; we do not need that generality here.) In the case $r_2 = n_2 = 1$ with $H_2 = (b)$, used throughout this paper, this reduces to $H_X = [\tilde{H}_1 \mid I_{r_1} \otimes \tilde{H}_2^T]$ and $H_Z = [I_{n_1} \otimes \tilde{H}_2 \mid \tilde{H}_1^T]$.
\end{defn}

\begin{ex}[Generalized bicycle codes as lifted products]\label{ex:gb}
Take $r_1 = n_1 = r_2 = n_2 = 1$, so $H_1 = (1)$ and $H_2 = (1)$ are $1 \times 1$ matrices over $\mathbb{F}_2$. Let $G = \mathbb{Z}_\ell = \langle x \mid x^\ell = 1 \rangle$ be the cyclic group of order~$\ell$. Label the single nonzero entry of $H_1$ by the group-algebra element $a(x) \in \mathbb{F}_2[x]/(x^\ell - 1)$, and the single nonzero entry of $H_2$ by the reversed polynomial $b(x^{-1})$, so that $H_2^* = (b(x))$. Writing $A$ and $B$ for the $\ell \times \ell$ circulant matrices of $a(x)$ and $b(x)$, the parity check matrices of the lifted product code become
\[
  H_X = [A \mid B], \qquad H_Z = [B^\top \mid A^\top].
\]
This is precisely the generalized bicycle (GB) ansatz of Kovalev and Pryadko~\cite{kovalev2013}. For instance, taking $\ell = 24$, $a(x) = 1 + x^2 + x^8 + x^{15}$, and $b(x) = 1 + x^2 + x^{12} + x^{17}$ yields the $[\![48, 6, 8]\!]$ code (code~A3 in~\cite{panteleev2021degenerate}).
\end{ex}

\begin{ex}[A quasi-cyclic GHP code as a lifted product]\label{ex:ghp}
Now take $r_1 = n_1 = 7$, $r_2 = n_2 = 1$, $G = \mathbb{Z}_{63}$, $H_2 = (1)$ with label $b(x) = 1 + x + x^6$, and $H_1$ a $7 \times 7$ matrix over $\mathbb{F}_2$ whose nonzero entries are labeled by monomials $x^i \in \mathbb{Z}_{63}$:
\[
  H_1 = \begin{pmatrix}
    x^{27} & 0      & 0      & 0      & 0      & x^{54} & 1      \\
    1      & x^{27} & 0      & 0      & 0      & 0      & x^{54} \\
    x^{54} & 1      & x^{27} & 0      & 0      & 0      & 0      \\
    0      & x^{54} & 1      & x^{27} & 0      & 0      & 0      \\
    0      & 0      & x^{54} & 1      & x^{27} & 0      & 0      \\
    0      & 0      & 0      & x^{54} & 1      & x^{27} & 0      \\
    0      & 0      & 0      & 0      & x^{54} & 1      & x^{27}
  \end{pmatrix}.
\]
Lifting replaces each monomial~$x^i$ with the corresponding $63 \times 63$ cyclic permutation matrix and each~$0$ with the zero matrix; the single entry $b(x) = 1 + x + x^6$ of $H_2$ lifts to the $63 \times 63$ circulant $\tilde{H}_2 = B$. The result is a CSS code with
\[
  H_X = [\tilde{H}_1 \mid I_7 \otimes B^\top], \qquad H_Z = [I_7 \otimes B \mid \tilde{H}_1^\top],
\]
which is the $[\![882, 24, d]\!]$ generalized hypergraph product code (code~B1 in~\cite{panteleev2021degenerate}) with $18 \leq d \leq 24$. (In~\cite{panteleev2021degenerate} code~B1 is presented with the labels $1$ and $x^{54}$ interchanged and with $H_X = [\tilde{H}_1 \mid I_7 \otimes B]$, $H_Z = [I_7 \otimes B^\top \mid \tilde{H}_1^\top]$; the substitution $x \mapsto x^{-1}$ combined with monomial rescalings identifies that presentation with the one used here, so both define the same code.) Its Tanner graph is $(3,6)$-regular and has girth~$6$; the protograph of $H_1$ is depicted in Figure~\ref{fig:ghp-protograph}.
\end{ex}

\begin{figure}[ht]
\centering
\begin{tikzpicture}[
  lnode/.style={circle, draw, fill=black, inner sep=2pt, minimum size=5pt},
  rnode/.style={circle, draw, fill=white, inner sep=2pt, minimum size=5pt},
  edgeA/.style={thick, ugentblue},
  edgeB/.style={thick, red, dashed},
  edgeC/.style={thick, black!50!green, dotted, thick}
]

\foreach \i in {1,...,7} {
  \node[lnode, label=left:{$\ell_{\i}$}] (L\i) at (0, -\i*1.2) {};
}

\foreach \i in {1,...,7} {
  \node[rnode, label=right:{$r_{\i}$}] (R\i) at (5, -\i*1.2) {};
}

\foreach \i in {1,...,7} {
  \draw[edgeA] (L\i) -- (R\i);
}

\draw[edgeB] (L1) -- (R7);
\draw[edgeB] (L2) -- (R1);
\draw[edgeB] (L3) -- (R2);
\draw[edgeB] (L4) -- (R3);
\draw[edgeB] (L5) -- (R4);
\draw[edgeB] (L6) -- (R5);
\draw[edgeB] (L7) -- (R6);

\draw[edgeC] (L1) -- (R6);
\draw[edgeC] (L2) -- (R7);
\draw[edgeC] (L3) -- (R1);
\draw[edgeC] (L4) -- (R2);
\draw[edgeC] (L5) -- (R3);
\draw[edgeC] (L6) -- (R4);
\draw[edgeC] (L7) -- (R5);

\draw[edgeA] (6.5, -1.2) -- ++(1,0) node[right, black] {$x^{27}$};
\draw[edgeB] (6.5, -2.0) -- ++(1,0) node[right, black] {$1$};
\draw[edgeC] (6.5, -2.8) -- ++(1,0) node[right, black] {$x^{54}$};

\end{tikzpicture}
\caption{The protograph of $H_1$ for the $[\![882, 24]\!]$ GHP code of Example \ref{ex:ghp}. Left vertices $\ell_i$ represent row nodes and right vertices $r_j$ represent column nodes. Each edge is labeled by its group element in $\mathbb{Z}_{63}$; in the lift, label $x^i$ is replaced by the cyclic permutation matrix $P^i$ of size $63 \times 63$. The three edge types correspond to the three circulant diagonals of $H_1$.}
\label{fig:ghp-protograph}
\end{figure}

\subsection{Tanner graph properties of quasi-cyclic GHP codes}\label{sec:qcghp-properties}

\paragraph{Regularity.}
The Tanner graph of $H_X$ (and symmetrically of $H_Z$) for the $[\![882,24,d]\!]$ code is $(3,6)$-regular. More generally, when $H_1$ has constant column weight~$w_1$ and constant row weight~$w_1'$ and $H_2$ is $1 \times 1$ with label $b(x)$ of Hamming weight~$w_2$, each variable node of $H_X = [\tilde{H}_1 \mid I_{r_1} \otimes \tilde{H}_2^T]$ has degree $w_1$ or $w_2$ according to its column block, while each check node has degree $w_1' + w_2$; in Example~\ref{ex:ghp}, $w_1 = w_2 = 3$ and $w_1' + w_2 = 6$, recovering the $(3,6)$-biregular graph. The biregular pair $(\gamma,k)$ is exploited spectrally throughout: the DFT block-diagonalisation $H_X H_X^T \cong \bigoplus_j M_j$ of Section~\ref{sec:spectral-cycles} is a biregular-specific simplification, and the closed-form biregular cycle counts of Smarandache and Mitchell~\cite{SM2026} lift to the QC-GHP setting (Remark~\ref{rem:higher-Nk-pushes}).

\paragraph{Protograph and girth.}
Lifting over $G = \mathbb{Z}_\ell$ preserves vertex degree, so biregularity of the protograph implies biregularity of the Tanner graph with the same degree pair~\cite{1577834,koetter2003graph}. A protograph closed walk with $\mathbb{Z}_\ell$-labels $g_1, \dots, g_{2k}$ lifts to $\ell$ Tanner-graph cycles if and only if the alternating sum $g_1 - g_2 + \cdots - g_{2k} \equiv 0 \pmod{\ell}$, and to none otherwise~\cite{karimi2012}; this obstruction is the source of the Sidon-type arithmetic conditions on $a(x), b(x)$ in Section~\ref{sec:spectral-cycles}. The Tanner graph of the $[\![882,24,d]\!]$ code has girth~$6$~\cite{panteleev2021degenerate}, which is optimal within the QC framework: any fully connected quasi-cyclic construction with column weight at least~$3$ yielding a CSS code ($H_X H_Z^T = 0$) has girth at most~$6$~\cite{AmirzadePanarioSadeghi2024}, so the classical strategy of raising girth to suppress short cycles is structurally unavailable; girth beyond~$6$ requires column weight~$2$~\cite{AmirzadePanarioSadeghi2024}.

\paragraph{Distance, trapping, and absorbing sets.}
Panteleev and Kalachev report $18 \leq d \leq 24$ for the $[\![882,24]\!]$ code~\cite{panteleev2021degenerate}; computing quantum minimum distance is hard in general~\cite{KapshikarKundu2023}, and sharp bounds are known only for generalized bicycle codes~\cite{WangPryadko2022}. Raveendran and Vasi\'c~\cite{raveendran2021} show that symmetric-stabilizer trapping sets occur already in generalized bicycle codes (the $r_1 = n_1 = 1$ case) and that a layered decoding schedule mitigates them. For hypergraph-product codes the absorbing sets are governed by the base matrices $H_1$, $H_2$ and their transposes~\cite{morris2024}; how the group lift interacts with this structure for QC-GHP codes with circulant base matrices is identified as open in~\cite{morris2024}. The chain-complex constructions of~\cite{morris2024} exploit the symmetric tensor form $H_{X, \mathrm{HGP}} = [H_1 \otimes I \mid I \otimes H_2^T]$ and do not transfer directly to the asymmetric lifted-product form $H_X = [\widetilde{H}_1 \mid I_{r_1} \otimes \widetilde{B}^T]$; the Sidon-type characterisation of Corollary~\ref{cor:sidon} and the spectral $6$-cycle count of Remark~\ref{rem:6cycles} give algebraic handles on this interaction in the girth-$6$ regime.

\subsection{Trapping sets, absorbing sets, and stopping sets in QLDPC codes}\label{sec:qldpc-substructures}

Existing QLDPC trapping/absorbing-set work~\cite{stambler2023addressing, chytas2025collective} develops \emph{algorithmic} methods \linebreak (learning-aided trapping-set identification, decoder modifications, post-decoding correction) but yields no closed-form lower bounds on absorbing-set sizes; transferring classical biregular bounds to the two-block setting $H_X = [\widetilde{H}_1 \mid I_{r_1} \otimes \widetilde{B}^T]$ would require an analogue of the Zhao-Xiong-Ye-Yan~\cite{zhao2026turan} assumption that no two $8$-cycles share a variable node, which for QC-GHP codes becomes a joint condition on the supports of $\widetilde{H}_1$ and $b(x)$ strictly stronger than the girth-$6$ Sidon condition of Corollary~\ref{cor:sidon}. In the classical setting, stopping sets (Definition~\ref{def:stopping}) characterise the failure of iterative decoding on the binary erasure channel~\cite{di2002, schwartz2006} and the stopping distance lower-bounds the minimum distance; the QLDPC analogue carries the additional X/Z block structure, giving two coupled notions: X-stopping sets $S \subseteq V_L(G_X)$ and Z-stopping sets $S \subseteq V_L(G_Z)$, each governing iterative decoding of one block of syndromes.

\section{A spectral framework for cycle counting for lifted-product Tanner graphs}\label{sec:spectral-cycles}

In this section we adapt and extend the classical spectral cycle-counting framework of Smarandache and Flanagan~\cite{SF2009} and Karimi and Banihashemi~\cite{karimi2012} to the Tanner graphs of lifted-product QLDPC codes~\cite{panteleev2021degenerate}.
For cyclic lifts over $G = \mathbb{Z}_\ell$, the biadjacency matrix of the Tanner graph block-diagonalises under the discrete Fourier transform, reducing cycle-counting traces to a sum over $\ell$ small matrices~\cite[Thm.~1, Cor.~2]{SF2009}, \cite[Theorem~2]{karimi2012}; in our setting this enters through Theorem~\ref{prop:spectral-trace} below.
For classical QC-LDPC codes the reduction is due to Smarandache and Flanagan~\cite{SF2009}, who showed that the eigenvalues of the $r L \times r L$ matrix $H^T H$ for an $L$-fold cyclic lift of a $J \times r$ protograph reduce to those of $L$ matrices of size $r \times r$ obtained by Fourier evaluation; Karimi and Banihashemi~\cite{karimi2012} give an alternative recipe via the directed edge matrix; and Smarandache and Mitchell~\cite{SM2026} derive explicit closed forms for $N_{2k}$ in any biregular Tanner graph, valid for $k \leq 7$.

On the quantum side, Zhang~\cite{Zhang2024thesis} carries out a finite-length combinatorial cycle analysis of the lifted product construction of~\cite{panteleev2021degenerate} (and its left-right generalisation), focusing on dominant $8$-cycles and proving a girth upper bound showing that $8$-cycles are unavoidable regardless of the group-algebra elements, unlike the classical QC-LDPC case. Our framework is complementary: where Zhang enumerates $8$-cycle topologies case by case, we develop a Fourier-spectral identity (Theorem~\ref{prop:spectral-trace}) that translates short-cycle counts into traces of small matrices, applies uniformly to all even cycle lengths $2k$ up to the girth bound, and yields the additive-energy/Sidon-type characterisations of \S\ref{sec:spectral-cycles} absent from~\cite{Zhang2024thesis}.
This section specialises the classical \emph{single-block} framework to the quantum two-block structure $H_X = [\widetilde{H}_1 \mid I_{r_1} \otimes \widetilde{B}^T]$ (Theorem~\ref{prop:spectral-trace}, where the second block contributes only a scalar shift $|b(\omega^j)|^2 I_{r_1}$ to each block-diagonal matrix; see Remark~\ref{rem:classical-vs-quantum-trace}), gives an additive-combinatorial reading of the GB-code $4$-cycle formula via additive energies (Lemma~\ref{lem:parseval-energy}, Proposition~\ref{prop:4cycles}), and a joint Sidon condition characterising girth-$6$ GB codes (Corollary~\ref{cor:sidon}).

\subsection{DFT block-diagonalisation}

Consider a lifted product code over $\mathbb{Z}_\ell$ (Definition~\ref{def:liftedproduct}) with base matrices $H_1 \in \mathbb{F}_2[\mathbb{Z}_\ell]^{r_1 \times n_1}$ and $H_2 \in \mathbb{F}_2[\mathbb{Z}_\ell]^{r_2 \times n_2}$, whose entries lie in the group ring $\mathbb{F}_2[\mathbb{Z}_\ell] \cong \mathbb{F}_2[x]/(x^\ell - 1)$.
We specialise to the QC-GHP case $r_2 = n_2 = 1$, so $H_2$ is the $1 \times 1$ matrix with entry $b(x) \in \mathbb{F}_2[x]/(x^\ell - 1)$. This covers the generalised bicycle ansatz (Example~\ref{ex:gb}) and the QC-GHP family (Example~\ref{ex:ghp}), including the running $[\![48, 6, 8]\!]$ and $[\![882, 24, d]\!]$ codes; in the higher-rank case $r_2, n_2 > 1$ the matrix~\eqref{eq:Mj} becomes the commuting Kronecker sum $M_j = A(\omega^j)A(\omega^j)^* \otimes I_{n_2} + I_{r_1} \otimes B(\omega^j)^* B(\omega^j)$, where $B(\omega^j)$ is the $r_2 \times n_2$ Fourier evaluation of $H_2$, so its eigenvalues are the pairwise sums of those of the two factors and the trace analysis of \S\ref{sec:spectral-cycles} carries over (see Remark~\ref{rem:classical-vs-quantum-trace}).
The $X$-parity check matrix is then
\[
    H_X = \bigl[\,\widetilde{H}_1 \;\big|\; I_{r_1} \otimes \widetilde{B}^{\mkern1mu T}\,\bigr],
\]
where $\widetilde{H}_1$ is the $(r_1 \ell) \times (n_1 \ell)$ matrix obtained by replacing each group-ring entry of $H_1$ with the corresponding $\ell \times \ell$ circulant matrix (and each zero entry with $0_{\ell \times \ell}$), and $\widetilde{B}$ is the $\ell \times \ell$ circulant associated with $b(x)$.

Let $\omega = e^{2\pi i/\ell}$.
Every $\ell \times \ell$ circulant $C$ with first column $(c_0, c_1, \ldots, c_{\ell-1})^T$ is diagonalised by the DFT matrix $F_\ell$: $F_\ell\, C\, F_\ell^* = \operatorname{diag}(\hat{c}_0, \ldots, \hat{c}_{\ell-1})$, where $\hat{c}_j = c(\omega^j) = \sum_{s=0}^{\ell-1} c_s\, \omega^{js}$.
Since $H_X$ has block structure $[\widetilde{H}_1 \mid I_{r_1} \otimes \widetilde{B}^T]$ with $(r_1 \ell)$ rows and $(n_1 + r_1)\ell$ columns, conjugate on the left by $U_L = I_{r_1} \otimes F_\ell$ and on the right by $U_R = \operatorname{diag}(I_{n_1} \otimes F_\ell^*,\; I_{r_1} \otimes F_\ell^*)$.
For the first block, $U_L\, \widetilde{H}_1\, (I_{n_1} \otimes F_\ell^*)$ diagonalises each circulant sub-block independently.
For the second block, since $b(x)$ has real (binary) coefficients, $\widetilde{B}^T$ corresponds to $b(x^{-1})$ and satisfies $F_\ell\, \widetilde{B}^T\, F_\ell^* = \operatorname{diag}(\overline{b(\omega^0)}, \ldots, \overline{b(\omega^{\ell-1})})$, since $b(\omega^{-j}) = \overline{b(\omega^j)}$ for real coefficients.
Grouping indices by frequency $j$ rather than block index $p$ then gives the block-diagonal decomposition
\begin{equation}\label{eq:block-diag}
    \widehat{H}_X = \bigoplus_{j=0}^{\ell-1} \widehat{H}_X^{(j)},
    \qquad
    \widehat{H}_X^{(j)} = \bigl[\, A(\omega^j) \;\big|\; \overline{b(\omega^j)}\, I_{r_1} \,\bigr] \in \mathbb{C}^{r_1 \times (n_1 + r_1)},
\end{equation}
where $A(\omega^j)$ is the $r_1 \times n_1$ matrix obtained by evaluating each group-ring entry of $H_1$ at $\omega^j$; see Figure~\ref{fig:dft-blockdiag}.

\begin{figure}[htp]
\centering
\begin{tikzpicture}[thick]
\draw[ugentblue, fill=ugentblue!10] (-5.5, 0.0) rectangle (-3.6, 1.6);
\draw[red, fill=red!10] (-3.6, 0.0) rectangle (-2.6, 1.6);
\node[font=\scriptsize] at (-4.55, 0.8) {$\widetilde{H}_1$};
\node[font=\scriptsize] at (-3.1, 0.8) {$I \otimes \widetilde{B}^T$};
\node[font=\scriptsize, below] at (-4.05, -0.1) {$r_1 \ell \times (n_1 + r_1)\ell$};
\node[above] at (-4.05, 1.6) {$H_X$};
\draw[->, very thick, black!60] (-2.3, 0.8) -- (-0.9, 0.8);
\node[font=\scriptsize, align=center, above] at (-1.6, 0.9) {$U_L\,(\cdot)\,U_R$};
\node[font=\scriptsize, align=center, below] at (-1.6, 0.7) {block-diag basis};
\foreach \j/\x in {0/-0.4, 1/0.4, 2/1.2, 3/2.0, 4/2.8} {
  \ifnum\j<4
    \draw[ugentblue, fill=ugentblue!10] (\x, 1.2 - \j*0.5) rectangle (\x+0.5, 1.6 - \j*0.5);
    \draw[red, fill=red!10] (\x+0.5, 1.2 - \j*0.5) rectangle (\x+0.7, 1.6 - \j*0.5);
  \fi
}
\node at (1.7, 0.3) {$\ddots$};
\foreach \j/\y in {4/-0.7} {
  \draw[ugentblue, fill=ugentblue!10] (2.0, \y) rectangle (2.5, \y+0.4);
  \draw[red, fill=red!10] (2.5, \y) rectangle (2.7, \y+0.4);
}
\node[font=\scriptsize, right] at (0.2, 1.4) {$\widehat{H}_X^{(0)} = [A(1) \mid b(1) I_{r_1}]$};
\node[font=\scriptsize, right] at (1.0, 0.9) {$\widehat{H}_X^{(1)}$};
\node[font=\scriptsize, right] at (1.8, 0.4) {$\widehat{H}_X^{(2)}$};
\node[font=\scriptsize, right] at (2.7, -0.5) {$\widehat{H}_X^{(\ell-1)}$};
\node[above] at (1.5, 1.7) {$\widehat{H}_X = \bigoplus_{j=0}^{\ell-1} \widehat{H}_X^{(j)}$};
\node[font=\scriptsize, below] at (1.5, -1.0) {each block: $r_1 \times (n_1 + r_1)$};
\end{tikzpicture}
\caption[DFT block-diagonalisation of $H_X$]{The discrete Fourier transform takes the large parity-check matrix $H_X = [\widetilde{H}_1 \mid I_{r_1} \otimes \widetilde{B}^T]$ on the left (size $r_1 \ell \times (n_1 + r_1) \ell$, with the first block in blue and the second in red) to a direct sum of $\ell$ much smaller blocks $\widehat{H}_X^{(j)}$ on the right (each of size $r_1 \times (n_1 + r_1)$, indexed by the Fourier frequency $j \in \{0, 1, \dots, \ell - 1\}$). The first block of every $\widehat{H}_X^{(j)}$ is the Fourier evaluation $A(\omega^j)$ of the protograph data and the second block is the scalar $\overline{b(\omega^j)} I_{r_1}$. Theorem~\ref{prop:spectral-trace} below uses this decomposition to reduce trace-of-power computations on $H_X H_X^T$ to a sum of traces on the smaller matrices $M_j = A(\omega^j) A(\omega^j)^* + |b(\omega^j)|^2 I_{r_1}$.}
\label{fig:dft-blockdiag}
\end{figure}

\begin{thm}[Spectral trace formula for lifted product codes]
\label{prop:spectral-trace}
Let $H_X$ be the parity check matrix of a lifted product code over $\mathbb{Z}_\ell$ as above.
For each $j = 0, \ldots, \ell - 1$, define the $r_1 \times r_1$ positive semidefinite Hermitian matrix
\begin{equation}\label{eq:Mj}
    M_j \;=\; A(\omega^j)\, A(\omega^j)^* \;+\; |b(\omega^j)|^2\, I_{r_1}.
\end{equation}
Then, by Lemma~\ref{lem:bipartite-trace}, the number of closed walks of length $2k$ in the Tanner graph of $H_X$ starting from check nodes is
\begin{equation}\label{eq:trace-formula}
    \operatorname{tr}\bigl((H_X H_X^T)^k\bigr)
    \;=\; \sum_{j=0}^{\ell-1} \operatorname{tr}(M_j^k)
    \;=\; \sum_{j=0}^{\ell-1} \sum_{i=1}^{r_1} \lambda_i(M_j)^k,
\end{equation}
where $\lambda_1(M_j) \leq \cdots \leq \lambda_{r_1}(M_j)$ are the eigenvalues of $M_j$. Positive semidefiniteness of $M_j$ ensures that all eigenvalues are non-negative real, so the right-hand side of~\eqref{eq:trace-formula} is a sum of non-negative real-valued powers and the cycle-count interpretation is consistent with the left-hand side being a real count.
\end{thm}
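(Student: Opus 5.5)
The plan is to compute $H_X H_X^T$ first, diagonalise it by the unitary change of basis $U_L = I_{r_1}\otimes F_\ell$, and then read off traces. Lemma~\ref{lem:bipartite-trace} already identifies $\operatorname{tr}((H_XH_X^T)^k)$ with the number of closed walks of length $2k$ starting at check nodes. So the content to prove is the matrix identity $\operatorname{tr}((H_XH_X^T)^k)=\sum_j \operatorname{tr}(M_j^k)$, plus positive semidefiniteness of each $M_j$.

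\textbf{Step 1 (expand the product).} From the block form $H_X=[\widetilde H_1\mid I_{r_1}\otimes\widetilde B^T]$ we get
\[
H_XH_X^T=\widetilde H_1\widetilde H_1^T+(I_{r_1}\otimes\widetilde B^T)(I_{r_1}\otimes\widetilde B)=\widetilde H_1\widetilde H_1^T+I_{r_1}\otimes \widetilde B^T\widetilde B .
\]
This product is taken over $\mathbb{R}$, since walk counting is integral, not over $\mathbb{F}_2$.

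\textbf{Step 2 (conjugate each term).} For the first term, $U_L\widetilde H_1(I_{n_1}\otimes F_\ell^*)$ is the $r_1\times n_1$ array of diagonal blocks $\operatorname{diag}_j(h_{pq}(\omega^j))$ by \eqref{eq:circulant-diag}, since each circulant block is diagonalised independently. Because $I_{n_1}\otimes F_\ell^*$ is unitary, inserting it and its adjoint between $\widetilde H_1$ and $\widetilde H_1^T=\widetilde H_1^*$ gives
\[
U_L\widetilde H_1\widetilde H_1^TU_L^*=\bigl(U_L\widetilde H_1(I\otimes F_\ell^*)\bigr)\bigl(U_L\widetilde H_1(I\otimes F_\ell^*)\bigr)^* .
\]
Applying the perfect-shuffle permutation that groups indices by frequency $j$ instead of block index $p$ turns this into $\bigoplus_j A(\omega^j)A(\omega^j)^*$. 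For the second term, $F_\ell\widetilde B^T\widetilde BF_\ell^*=\operatorname{diag}(|b(\omega^j)|^2)$ because $\widetilde B^T=\widetilde B^*$ is normal and both factors are diagonalised by the same $F_\ell$. Under the same shuffle this becomes $\bigoplus_j |b(\omega^j)|^2I_{r_1}$. Adding the two gives $P U_L H_XH_X^T U_L^* P^T=\bigoplus_j M_j$, which agrees with \eqref{eq:block-diag}, since $\widehat H_X^{(j)}(\widehat H_X^{(j)})^*=M_j$.

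\textbf{Step 3 (traces and eigenvalues).} Unitary similarity preserves traces of powers, so $\operatorname{tr}((H_XH_X^T)^k)=\operatorname{tr}\bigl((\bigoplus_jM_j)^k\bigr)=\sum_j\operatorname{tr}(M_j^k)$. Each $M_j$ is Hermitian, so diagonalising it unitarily gives $\operatorname{tr}(M_j^k)=\sum_i\lambda_i(M_j)^k$. For positive semidefiniteness, $x^*M_jx=\|A(\omega^j)^*x\|^2+|b(\omega^j)|^2\|x\|^2\ge0$, so all eigenvalues are nonnegative reals.

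\textbf{Main obstacle.} The only delicate point is the convention bookkeeping in Step~2. The paper writes $\hat c_j=c(\omega^j)$ with $F_{jk}=\omega^{jk}/\sqrt\ell$, and I need to check that $\widetilde B^T$ really maps to $\overline{b(\omega^j)}$, with no sign flip $j\mapsto -j$ misaligning the two blocks. I would handle this by using $C^T=C^*$ for real circulants, so that $F C^* F^*=(FCF^*)^*$. This makes the second block at frequency $j$ the conjugate of the $j$-th eigenvalue of $\widetilde B$ at the same $j$. Even if a global reindexing $j\mapsto -j$ occurred, it would be the same permutation for both terms, and $|b(\omega^j)|^2$ is invariant under conjugation, so the block sum would be unchanged.
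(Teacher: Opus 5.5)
Your proposal is correct and follows essentially the same route as the paper: DFT conjugation by $I_{r_1}\otimes F_\ell$, a frequency-first permutation, and trace invariance under unitary similarity. The one difference is that you expand and conjugate the Gram matrix $H_XH_X^T=\widetilde H_1\widetilde H_1^T+I_{r_1}\otimes\widetilde B^T\widetilde B$ term by term, whereas the paper block-diagonalises $H_X$ itself via $U_L,U_R$. Your order avoids the $\overline{b(\omega^j)}$ sign bookkeeping, and you also verify positive semidefiniteness of $M_j$ explicitly, which the paper leaves implicit.
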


\begin{proof}
Treat $H_X$ as a real $0$-$1$ matrix, so $H_X^T = H_X^*$.
With $U_L, U_R$ as above, the conjugated matrix $\widehat{H}_X = U_L\, H_X\, U_R$ has the block-diagonal form~\eqref{eq:block-diag} after the frequency-first permutation $P$.
As $U_L, U_R$ are unitary and $P$ is a permutation matrix (hence unitary), these similarities preserve traces of matrix powers, so
\[
    \operatorname{tr}\bigl((H_X H_X^T)^k\bigr)
    = \operatorname{tr}\bigl((U_L\, H_X H_X^T\, U_L^*)^k\bigr)
    = \operatorname{tr}\bigl((\widehat{H}_X \widehat{H}_X^*)^k\bigr)
    = \sum_{j=0}^{\ell-1} \operatorname{tr}\Bigl(\bigl(\widehat{H}_X^{(j)} (\widehat{H}_X^{(j)})^*\bigr)^k\Bigr),
\]
the last equality using the block-diagonal structure (see the spectral background \S\ref{sec:spectral-background} for matrix powers as walk counts and the trace-of-powers and DFT identities). Finally,
\begin{align*}
    \widehat{H}_X^{(j)} (\widehat{H}_X^{(j)})^*
    &= \bigl[\, A(\omega^j) \;\big|\; \overline{b(\omega^j)}\, I_{r_1} \,\bigr]
    \begin{bmatrix} A(\omega^j)^* \\ b(\omega^j)\, I_{r_1} \end{bmatrix}
   \\
   &= A(\omega^j) A(\omega^j)^* + |b(\omega^j)|^2 I_{r_1}
   \\
   &= M_j,
\end{align*}
which completes the proof.
\end{proof}

\begin{rem}\label{rem:classical-vs-quantum-trace}
Theorem~\ref{prop:spectral-trace} reduces the computation of traces of $(r_1 \ell) \times (r_1 \ell)$ matrix powers to a sum of traces of $r_1 \times r_1$ matrix powers.
For the $[\![882,24,d]\!]$ code, this is a reduction from $441 \times 441$ matrices to $63$ copies of $7 \times 7$ matrices.

For a single classical $J \times r$ protograph parity-check matrix lifted by a factor $L$, the eigenvalue reduction $H^T H \to \{H(\omega^j) H(\omega^j)^T\}_{j=0}^{L-1}$ that underlies Theorem~\ref{prop:spectral-trace} is due to Smarandache and Flanagan~\cite{SF2009}, and the explicit cycle-count formulas $N_{2k} = N_{2k}(\{\lambda_i\})$ for $k \leq 7$ are derived in Smarandache and Mitchell~\cite{SM2026} for any biregular Tanner graph. Karimi and Banihashemi~\cite{karimi2012} give a parallel DFT recipe via the directed edge matrix; the two approaches are equivalent up to a change of basis. What this section adds to that classical literature is the specialisation to the \emph{quantum} two-block structure $H_X = [\widetilde{H}_1 \mid I_{r_1} \otimes \widetilde{B}^T]$: the second block $I_{r_1} \otimes \widetilde{B}^T$ contributes only a scalar shift $|b(\omega^j)|^2 I_{r_1}$ to each $M_j$, so the spectrum of $M_j$ is that of $A(\omega^j) A(\omega^j)^*$ from the first block, shifted uniformly by the second. This scalar-shift form would not hold for a general two-block concatenation with independent column structure.
\end{rem}

\subsection{Applications}\label{subsec:applications}

In this section we provide some applications of the proposed spectral framework. In particular we illustrate how to use the spectral trace formula of Theorem~\ref{prop:spectral-trace} in two directions: the scalar reduction for generalised bicycle codes, and the genuinely matrix-valued QC-GHP case worked out on the running $[\![882,24,d]\!]$ code.

\subsubsection{Specialisation to generalised bicycle codes}\label{subsubsec:gb-specialisation}

\begin{cor}[Scalar trace formula for GB codes]\label{cor:trace-gb}
When $r_1=n_1=1$ (generalised bicycle codes), $A(\omega^j) = a(\omega^j)$ is a scalar, so $M_j$ reduces to the scalar $|a(\omega^j)|^2 + |b(\omega^j)|^2$, and the expression~\eqref{eq:trace-formula} becomes
\begin{equation}\label{eq:trace-gb}
    \operatorname{tr}\bigl((H_X H_X^T)^k\bigr) = \sum_{j=0}^{\ell-1} \bigl(|a(\omega^j)|^2 + |b(\omega^j)|^2\bigr)^k.
\end{equation}
\end{cor}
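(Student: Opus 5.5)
This is a direct specialisation of Theorem~\ref{prop:spectral-trace}, and I would prove it by substituting $r_1=n_1=1$ into that result and simplifying.

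\emph{Step 1: identify the matrices.} When $r_1=n_1=1$, the base matrix $H_1$ is the $1\times 1$ matrix with entry $a(x)\in\mathbb{F}_2[x]/(x^\ell-1)$. Its lift $\widetilde{H}_1$ is the single circulant $A$, and $I_{r_1}\otimes\widetilde{B}^T=\widetilde{B}^T$. Hence $H_X=[A\mid B^T]$, which matches the GB ansatz of Example~\ref{ex:gb} up to the harmless relabelling $b(x)\leftrightarrow b(x^{-1})$. In either convention $|b(\omega^j)|^2$ is unchanged, because the coefficients are real and so $b(\omega^{-j})=\overline{b(\omega^j)}$.

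\emph{Step 2: evaluate the Fourier blocks.} By definition, $A(\omega^j)$ is obtained by evaluating each entry of $H_1$ at $\omega^j$. Here that is the $1\times1$ matrix $(a(\omega^j))$, so
\[
A(\omega^j)A(\omega^j)^*=a(\omega^j)\overline{a(\omega^j)}=|a(\omega^j)|^2 .
\]
Substituting into \eqref{eq:Mj} gives
\[
M_j=|a(\omega^j)|^2+|b(\omega^j)|^2,
\]
a non-negative real scalar. It is its own unique eigenvalue, so $\operatorname{tr}(M_j^k)=M_j^k$.

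\emph{Step 3: conclude.} Inserting $\operatorname{tr}(M_j^k)=M_j^k$ into \eqref{eq:trace-formula} yields \eqref{eq:trace-gb} directly.

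There is no real obstacle. The only point needing care is the bookkeeping of the conjugation convention for the second block, namely $B$ versus $B^T$, or equivalently $b(x)$ versus $b(x^{-1})$. This is harmless because only the modulus $|b(\omega^j)|$ appears in the final formula.

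As an optional sanity check, take $j=0$: the term is $(|a|_w+|b|_w)^k$, where $|a|_w$ and $|b|_w$ are the Hamming weights of $a$ and $b$. This is consistent with the check-degree eigenvalue $\deg(\text{check})^k$ of $H_XH_X^T$.
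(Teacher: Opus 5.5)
Your argument is correct and is exactly the paper's proof: specialise Theorem~\ref{prop:spectral-trace} to $r_1=n_1=1$, observe that $M_j$ becomes the scalar $|a(\omega^j)|^2+|b(\omega^j)|^2$, and read off $\operatorname{tr}(M_j^k)=M_j^k$. One correction to your optional sanity check: since $a(1)=\operatorname{wt}(a)$ and $b(1)=\operatorname{wt}(b)$, the $j=0$ term is $\bigl(\operatorname{wt}(a)^2+\operatorname{wt}(b)^2\bigr)^k$, which is the eigenvalue of $H_XH_X^T$ on the all-ones vector, not $(\operatorname{wt}(a)+\operatorname{wt}(b))^k$, because the check degree is a diagonal entry of $H_XH_X^T$ rather than an eigenvalue.
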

\begin{proof}
Use Theorem~\ref{prop:spectral-trace} with $r_1=n_1=1$, where $A(\omega^j)=a(\omega^j)$ is a scalar.
\end{proof}

\subsubsection{Application to the $[\![882,24,d]\!]$ QC-GHP code}\label{subsubsec:ghp-882}

This subsubsection applies the spectral trace formula (Theorem~\ref{prop:spectral-trace}) to the genuinely matrix-valued QC-GHP case, computing the $4$-cycle count of the running $[\![882,24,d]\!]$ code.
For QC-GHP codes $A(\omega^j)$ is genuinely $r_1 \times r_1$, so~\eqref{eq:Mj} no longer collapses to a scalar shift.
We illustrate on the $[\![882,24,d]\!]$ code of Example~\ref{ex:ghp}, where a second layer of cyclic symmetry inside $H_1$ yields the eigenvalues of each $M_j$ in closed form.

\begin{ex}[{Spectral cycle counting for the $[\![882,24,d]\!]$ code}]\label{ex:spectral-882}
With $\ell = 63$, $r_1 = n_1 = 7$, and $b(x) = 1 + x + x^6$ as in Example~\ref{ex:ghp}, the protograph matrix
\[
    H_1 \;=\; x^{27}\, I_7 \;+\; S \;+\; x^{54}\, S^2,
\]
is itself a polynomial in the $7 \times 7$ cyclic shift $S$ defined by $(S)_{i,k} = 1$ iff $k \equiv i - 1 \pmod{7}$ (the three diagonals of $H_1$ in Example~\ref{ex:ghp} correspond to the three terms of this polynomial).
Evaluating each group-ring entry at $\omega^j$, where $\omega = e^{2\pi i / 63}$, gives the $7 \times 7$ matrix
\[
    A(\omega^j) \;=\; \omega^{27 j}\, I_7 \;+\; S \;+\; \omega^{54 j}\, S^2,
\]
which is itself a polynomial in $S$, so it is simultaneously diagonalised with $S$ in the basis of $\mathbb{Z}_7$-Fourier vectors.
The eigenvalues of $S$ are $\eta^{-m}$ for $m = 0, \ldots, 6$ with $\eta = e^{2\pi i / 7} = \omega^9$, so the eigenvalues of $A(\omega^j)$ are
\begin{equation}\label{eq:882-mu}
    \mu_{j,m} \;=\; \omega^{27 j} \;+\; \omega^{-9 m} \;+\; \omega^{54 j - 18 m}, \qquad m = 0, \ldots, 6.
\end{equation}
Since $A(\omega^j)$ commutes with $S$ and is normal, $A(\omega^j) A(\omega^j)^*$ has eigenvalues $|\mu_{j,m}|^2$, and so by Theorem~\ref{prop:spectral-trace} the matrix $M_j$ has eigenvalues $|\mu_{j,m}|^2 + |b(\omega^j)|^2$.
The spectral trace formula~\eqref{eq:trace-formula} therefore reduces to the explicit double sum
\begin{equation}\label{eq:882-trace}
    \operatorname{tr}\bigl((H_X H_X^T)^k\bigr)
    \;=\; \sum_{j=0}^{62}\, \sum_{m=0}^{6}\, \bigl(|\mu_{j,m}|^2 + |b(\omega^j)|^2\bigr)^k,
\end{equation}
turning a $441 \times 441$ trace computation into $441$ scalar evaluations of~\eqref{eq:882-mu} followed by a $k$-th power.

As a consistency check at $k = 1$: each $A(\omega^j)$ has Frobenius norm squared $\|A(\omega^j)\|_F^2 = 21$ (it has $21$ unimodular entries), so $\sum_m |\mu_{j,m}|^2 = 21$ for every $j$, giving $\sum_{j,m} |\mu_{j,m}|^2 = 63 \cdot 21 = 1323$.
By Parseval, $\sum_j |b(\omega^j)|^2 = \ell \cdot \operatorname{wt}(b) = 189$, so $\sum_{j,m} |b(\omega^j)|^2 = 7 \cdot 189 = 1323$.
The two contributions sum to $\operatorname{tr}(H_X H_X^T) = 2646 = 6 \cdot 441$, as expected from the $(3,6)$-regularity of the Tanner graph.

At $k = 2$, Lemma~\ref{lem:4cycle-trace} below with $\sum_i d_{c,i}^2 = 441 \cdot 36 = 15876$ and $\sum_v d_v(d_v - 1) = 882 \cdot 6 = 5292$ gives
\[
    C_4 \;=\; \tfrac{1}{4}\bigl(\operatorname{tr}\bigl((H_X H_X^T)^2\bigr) \;-\; 21168\bigr).
\]
\end{ex}

\begin{cor}[{$4$-cycle count of the $[\![882,24,d]\!]$ code}]\label{cor:882-4cycle}
The Tanner graph of the $[\![882,24,d]\!]$ code has girth~$6$ \cite{panteleev2021degenerate}, equivalently $C_4 = 0$, so the protograph parameters $(27, 54)$ together with $b(x) = 1 + x + x^6$ satisfy the closed-form identity
\begin{equation}\label{eq:882-girth-test}
    \sum_{j=0}^{62}\sum_{m=0}^{6}\bigl(|\mu_{j,m}|^2 + |b(\omega^j)|^2\bigr)^2 \;=\; 21168.
\end{equation}
\end{cor}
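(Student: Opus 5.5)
The plan is to chain three facts already on the table. The first is the girth statement from \cite{panteleev2021degenerate}: the Tanner graph has girth~$6$, so in particular $C_4=0$. The second is the $4$-cycle trace identity of Lemma~\ref{lem:4cycle-trace}, specialised as in Example~\ref{ex:spectral-882} to $C_4=\tfrac14\bigl(\operatorname{tr}((H_XH_X^T)^2)-21168\bigr)$. The third is the explicit double-sum evaluation \eqref{eq:882-trace} of $\operatorname{tr}((H_XH_X^T)^k)$ at $k=2$. Substituting $C_4=0$ into the second gives $\operatorname{tr}((H_XH_X^T)^2)=21168$, and rewriting the left side via \eqref{eq:882-trace} yields \eqref{eq:882-girth-test}.

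The substantive step is justifying the trace correction term, since that is where the constants come from. I would verify it directly from Lemma~\ref{lem:bipartite-trace}. A closed walk of length $4$ starting at a check $c$ is $c\to v\to c'\to v'\to c$. Such walks split into three kinds:
\begin{itemize}
\item the degenerate walks with $c'=c$, of which there are $d_c^2$;
\item the walks with $c'\neq c$ and $v=v'$, of which there are $\sum_{v\in N(c)}(d_v-1)$;
\item genuine $4$-cycles, each counted $4$ times (two starting checks and two orientations).
\end{itemize}
Summing over $c$ gives
\[
\operatorname{tr}\bigl((H_XH_X^T)^2\bigr)=\sum_c d_c^2+\sum_v d_v(d_v-1)+4C_4 .
\]
With the $(3,6)$-regularity, this becomes $441\cdot36+882\cdot6+4C_4=21168+4C_4$, matching the constants quoted in the example.

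I also need to check the spectral side. Theorem~\ref{prop:spectral-trace} gives $\operatorname{tr}((H_XH_X^T)^2)=\sum_j\operatorname{tr}(M_j^2)$. Since $A(\omega^j)$ is a polynomial in the circulant $S$, it is normal and unitarily diagonalised by the $\mathbb{Z}_7$-Fourier basis, with eigenvalues $\mu_{j,m}$ as in \eqref{eq:882-mu}. So $M_j$ has eigenvalues $|\mu_{j,m}|^2+|b(\omega^j)|^2$, and $\operatorname{tr}(M_j^2)=\sum_m(|\mu_{j,m}|^2+|b(\omega^j)|^2)^2$. One point deserves care: the eigenvalue of $S$ must be matched correctly against $\omega^{9}=\eta$ in $\mu_{j,m}$. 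However, any relabelling $m\mapsto -m$ permutes the inner sum and leaves the total unchanged, so this is harmless.

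The main obstacle is only the external input, namely that girth is exactly $6$ (at least, that $C_4=0$). I take this from \cite{panteleev2021degenerate} rather than reproving it. If an independent check were wanted, I would verify $C_4=0$ through the lifting criterion of \S\ref{sec:qcghp-properties}: no protograph $4$-walk has alternating label sum $\equiv0\pmod{63}$. Cycles inside $\widetilde H_1$ would be checked via the differences among $\{27,0,54\}$ on the shift pattern of $H_1$. Cycles inside $I_7\otimes B^T$ reduce to $\{0,1,6\}$ being Sidon mod $63$. Mixed cycles need a pair of checks sharing both an $\widetilde H_1$ column and a $B^T$ column, which is impossible because the second block is block-diagonal in the row index while the first block's columns connect distinct row blocks in a Sidon-like way. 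Given the cited girth, the corollary itself is a direct substitution.
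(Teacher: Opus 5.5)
Your proposal is correct and follows the paper's proof: the cited girth gives $C_4=0$, the $k=2$ reduction $C_4=\tfrac14\bigl(\operatorname{tr}((H_XH_X^T)^2)-21168\bigr)$ then forces the trace to equal $21168$, and \eqref{eq:882-trace} identifies that trace with the double sum. Your closed-walk rederivation of $21168=\sum_c d_c^2+\sum_v d_v(d_v-1)$, the normality/relabelling check on $\mu_{j,m}$, and the optional lifting check of $C_4=0$ are sound extras that the paper takes from Lemma~\ref{lem:4cycle-trace} and Example~\ref{ex:spectral-882}. Only $C_4=0$, i.e.\ girth $\geq 6$, is actually used.
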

\begin{proof}
Substitute the exponents $(27, 54)$, the modulus $\ell = 63$, and $b(x) = 1 + x + x^6$ into the double sum~\eqref{eq:882-trace} at $k = 2$, whose value is $\operatorname{tr}\bigl((H_X H_X^T)^2\bigr)$. Since the girth is~$6$ we have $C_4 = 0$, so the $k = 2$ reduction $C_4 = \tfrac{1}{4}\bigl(\operatorname{tr}\bigl((H_X H_X^T)^2\bigr) - 21168\bigr)$ forces $\operatorname{tr}\bigl((H_X H_X^T)^2\bigr) = 21168$, which is the left-hand side of~\eqref{eq:882-girth-test}.
\end{proof}
The identity~\eqref{eq:882-girth-test} is a sharp $4$-cycle test: any modification of the exponents $(27, 54)$ or of $b(x)$ that violates it introduces $4$-cycles into the Tanner graph, and every nonnegative deviation from $21168$ equals $4 C_4$.

\subsection{Cycle counts via additive energy}

Next we connect the formula~\eqref{eq:trace-gb} to additive combinatorics over $\mathbb{Z}_\ell$, so that $C_4 = 0$ becomes an arithmetic condition on the supports $S_a, S_b \subset \mathbb{Z}_\ell$ rather than a closed-walk count.
Write $S_a = \operatorname{supp}(a) \subset \mathbb{Z}_\ell$ and $S_b = \operatorname{supp}(b)$ for the supports of $a(x)$ and $b(x)$.
\begin{defn}[Additive energy and cross-energy]\label{def:additive-energy}
    The \emph{additive energy of a set} $S \subset \mathbb{Z}_\ell$ is
    $$E(S) = \bigl|\bigl\{(i,k,p,q) \in S^4 : i+ p  \equiv k+q \pmod{\ell}\bigr\}\bigr| = \sum_{d \in \mathbb{Z}_\ell} r_d(S)^2,
    $$
    where $r_d(S) = |\{(x,y) \in S^2 : x - y \equiv d\}|$.
    The \emph{cross-energy} of the sets $S_a$ and $S_b\subset \mathbb{Z}_\ell$ is
\[
    E(S_a, S_b) = \sum_{d \in \mathbb{Z}_\ell} r_d(S_a) \cdot r_d(S_b).
\]
\end{defn}

\begin{ex}\label{ex:additive-energy-small}
For $S = \{0, 1, 3\} \subset \mathbb{Z}_7$, the difference counts are $r_0(S) = 3$ and $r_{\pm 1}(S) = r_{\pm 2}(S) = r_{\pm 3}(S) = 1$, giving $E(S) = 9 + 6 = 15$ (Figure~\ref{fig:additive-energy}). Proposition~\ref{prop:4cycles} below expresses the $4$-cycle count of a generalised bicycle code through these quantities.
\end{ex}

\begin{figure}[ht]
\centering
\begin{subfigure}[t]{0.40\textwidth}
\centering
\begin{tikzpicture}[thick, scale=1.2,
  inSet/.style={circle, draw=black, fill=ugentblue, minimum size=8pt, inner sep=0pt},
  outSet/.style={circle, draw=black, fill=white, minimum size=8pt, inner sep=0pt}]
\foreach \k in {0,1,2,3,4,5,6} {
  \pgfmathsetmacro{\ang}{90 - 360*\k/7}
  \pgfmathsetmacro{\inS}{(\k==0 || \k==1 || \k==3) ? 1 : 0}
  \ifnum\inS=1
    \node[inSet] (z\k) at (\ang:1.1) {};
  \else
    \node[outSet] (z\k) at (\ang:1.1) {};
  \fi
  \node[font=\scriptsize] at ($(\ang:1.45)$) {$\k$};
}
\end{tikzpicture}
\caption{$S = \{0, 1, 3\} \subset \mathbb{Z}_7$ (blue).}
\label{fig:additive-energy-set}
\end{subfigure}
\hfill
\begin{subfigure}[t]{0.55\textwidth}
\centering
\begin{tikzpicture}[thick, scale=0.85]
\draw[->] (0, 0) -- (8, 0) node[right, font=\scriptsize] {$d \in \mathbb{Z}_7$};
\draw[->] (0, 0) -- (0, 3.6) node[above, font=\scriptsize] {$r_d(S)$};
\foreach \d/\h in {0/3, 1/1, 2/1, 3/1, 4/1, 5/1, 6/1} {
  \fill[ugentblue] (\d + 0.4, 0) rectangle (\d + 0.9, \h);
  \node[font=\scriptsize] at (\d + 0.65, -0.3) {$\d$};
  \node[font=\scriptsize] at (\d + 0.65, \h + 0.25) {$\h$};
}
\node[font=\scriptsize, align=left] at (5.6, 3.0) {$E(S) = \sum_d r_d^2$\\$= 9 + 6 = 15$};
\end{tikzpicture}
\caption{Difference-multiplicity profile $r_d(S)$.}
\label{fig:additive-energy-rd}
\end{subfigure}
\caption[Additive energy on $\mathbb{Z}_7$]{Additive energy for the small example $S = \{0, 1, 3\} \subset \mathbb{Z}_7$. \textbf{(a)} The set $S$ is shown as the three blue points on the cyclic group $\mathbb{Z}_7$ (white points are non-members). \textbf{(b)} The histogram $r_d(S) = |\{(x, y) \in S^2 : x - y \equiv d\}|$ takes value $3$ at $d = 0$ (every pair $(s, s)$ contributes) and value $1$ at each $d \in \{1, 2, 3, 4, 5, 6\}$ (the six non-zero pairwise differences of $S$ are pairwise distinct in $\mathbb{Z}_7$, which makes $S$ a \emph{Sidon set}). Summing $r_d(S)^2$ gives $E(S) = 3^2 + 6 \cdot 1^2 = 15$ quadruples $(i, k, p, q) \in S^4$ with $i + p \equiv k + q \pmod 7$.}
\label{fig:additive-energy}
\end{figure}
\begin{rem}\label{rem:cross-energy-convention}
Since $r_{-d}(S) = r_d(S)$ for any $S \subset \mathbb{Z}_\ell$ (by the substitution $(x,y) \mapsto (y,x)$), the definition $E(S_a, S_b) = \sum_d r_d(S_a)\, r_d(S_b)$, which naturally counts quadruples with $i - k \equiv p - q$ (i.e.\ $i - k - p + q \equiv 0$), also equals the count of quadruples with $i - k + p - q \equiv 0$.
This is the form that arises from the Fourier expansion in Lemma~\ref{lem:parseval-energy}(ii).
\end{rem}

\begin{lem}\label{lem:parseval-energy}
Let $S \subset \mathbb{Z}_\ell$ and $a(x) = \sum_{s \in S} x^s$.
Then:
\begin{enumerate}
    \item[\textnormal{(i)}] $\displaystyle\sum_{j=0}^{\ell-1} |a(\omega^j)|^4 = \ell \cdot E(S)$.
    \item[\textnormal{(ii)}] For $T \subset \mathbb{Z}_\ell$ with $b(x) = \sum_{t \in T} x^t$, $\displaystyle\sum_{j=0}^{\ell-1} |a(\omega^j)|^2\,|b(\omega^j)|^2 = \ell \cdot E(S, T)$.
\end{enumerate}
\end{lem}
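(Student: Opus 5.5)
The plan is to expand $|a(\omega^j)|^2$ directly as a Fourier series whose coefficients are the difference counts $r_d(S)$, and then to use orthogonality of characters on $\mathbb{Z}_\ell$. Both parts reduce to the same computation.

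First, since $a$ has real (binary) coefficients, $\overline{a(\omega^j)} = a(\omega^{-j})$. Hence
\[
|a(\omega^j)|^2 = \sum_{x,y\in S}\omega^{j(x-y)} = \sum_{d\in\mathbb{Z}_\ell} r_d(S)\,\omega^{jd}.
\]
So $|a(\omega^j)|^2$ is exactly the DFT of the function $d\mapsto r_d(S)$, in the sign convention $\hat c(j)=\sum_d c_d\omega^{jd}$ of \S\ref{subsec:circulants}. The same identity holds for $b$ with $r_d(T)$.

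For (ii), multiply the two expansions and sum over $j$:
\[
\sum_{j}|a(\omega^j)|^2|b(\omega^j)|^2=\sum_{d,e} r_d(S)\,r_e(T)\sum_{j=0}^{\ell-1}\omega^{j(d+e)}.
\]
The inner sum equals $\ell$ if $e\equiv -d$ and $0$ otherwise. This gives $\ell\sum_d r_d(S)\,r_{-d}(T)$. By the symmetry $r_{-d}(T)=r_d(T)$ noted in Remark~\ref{rem:cross-energy-convention}, this equals $\ell\cdot E(S,T)$.

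Part (i) is the special case $T=S$, $b=a$. We get $\ell\sum_d r_d(S)^2=\ell\cdot E(S)$, using the second expression for $E(S)$ in Definition~\ref{def:additive-energy}. Alternatively, (i) follows from Parseval~\eqref{eq:parseval} applied to $f=r_\cdot(S)$. In that case one has to check the sign convention: $\hat f(j)$ in \eqref{eq:parseval} uses $\omega^{-jk}$, which only reindexes $j\mapsto -j$ and leaves the sum of squared moduli unchanged.

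The argument is routine; the only point needing care is this bookkeeping of signs and conjugates. Specifically, one must make sure the orthogonality relation pairs $d$ with $-d$ rather than $d$, and invoke the symmetry of $r_d$ to match the definition of cross-energy. It is also worth checking that $E(S)=\sum_d r_d(S)^2$ agrees with the quadruple count. This holds because $i+p\equiv k+q$ is equivalent to $i-k\equiv q-p$, so the count is $\sum_d r_d\,r_{-d}=\sum_d r_d^2$ by the same symmetry.
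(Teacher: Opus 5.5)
Your proof is correct and follows the paper's argument: you expand $|a(\omega^j)|^2$ as a character sum, apply orthogonality of characters on $\mathbb{Z}_\ell$, and use the symmetry $r_{-d}=r_d$ to match the cross-energy definition. The only cosmetic differences are that you group terms by the difference $d$ and obtain (i) as the case $T=S$ of (ii), whereas the paper counts quadruples $(i,k,p,q)$ directly and proves (i) first.
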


\begin{proof}
For (i), write $|a(\omega^j)|^2 = \sum_{i,k \in S} \omega^{j(i-k)}$, so that
\[
    |a(\omega^j)|^4 = \Bigl(\sum_{i,k \in S} \omega^{j(i-k)}\Bigr)\Bigl(\sum_{p,q \in S} \omega^{j(p-q)}\Bigr) = \sum_{\substack{i,k,p,q \in S}} \omega^{j(i - k + p - q)}.
\]
Summing over $j$ and using the orthogonality relation $\sum_{j=0}^{\ell-1} \omega^{jm} = \ell\,\mathbf{1}_{m \equiv 0}$ gives
\[
    \sum_{j=0}^{\ell-1} |a(\omega^j)|^4 = \ell \cdot \bigl|\{(i,k,p,q) \in S^4 : i - k + p - q \equiv 0\}\bigr| = \ell \cdot E(S).
\]
For (ii), the same argument gives $$\sum_j |a(\omega^j)|^2 |b(\omega^j)|^2 = \ell \cdot |\{(i,k,p,q) : i,k \in S,\; p,q \in T,\; i - k + p - q \equiv 0\}|,$$ which equals $\ell \cdot E(S, T)$ by Remark~\ref{rem:cross-energy-convention}.
\end{proof}

\begin{rem}\label{rem:energy-computation}
Lemma~\ref{lem:parseval-energy} gives two interchangeable ways to compute $E(S)$ in practice: combinatorially as $\sum_d r_d(S)^2$ by enumerating differences (cost $O(|S|^2 + \ell)$), or analytically as $\frac{1}{\ell}\sum_j |a(\omega^j)|^4$ via an FFT (cost $O(\ell \log \ell)$).
The same alternative applies to $E(S_a, S_b)$ via part (ii).
\end{rem}

We also use the standard $4$-cycle count in a bipartite graph via the trace of the squared adjacency matrix.

\begin{lem}[see e.g.\ {\cite{halfordchugg2006}}]\label{lem:4cycle-trace}
In a bipartite graph with biadjacency matrix $H$ (check nodes as rows, variable nodes as columns), the number of 4-cycles is
\[
    C_4 = \frac{1}{4}\Bigl(\operatorname{tr}\bigl((H H^T)^2\bigr) - \sum_{i} d_{c,i}^2 - \sum_v d_v(d_v - 1)\Bigr),
\]
where $d_{c,i}$ is the degree of check node $i$ and $d_v$ is the degree of variable node $v$.
\end{lem}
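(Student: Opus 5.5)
The plan is to use Lemma~\ref{lem:bipartite-trace} to read $\operatorname{tr}\bigl((HH^T)^2\bigr)$ as the number of closed walks of length $4$ that start at a check node. I will then split these walks by their shape and subtract the degenerate ones. Each such walk has the form $c_0 \to v_1 \to c_1 \to v_2 \to c_0$, where the $c$'s are check nodes and the $v$'s are variable nodes. Since the graph is simple, the walk is a genuine $4$-cycle exactly when $c_1 \neq c_0$ and $v_1 \neq v_2$. Otherwise it is degenerate, and there are three degenerate types: (a) $v_1 = v_2$; (b) $c_1 = c_0$ with $v_1 \neq v_2$; and (c) both $c_1=c_0$ and $v_1 = v_2$.

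First I count the degenerate walks directly from the degrees.
\begin{itemize}
\item Walks with $c_1 = c_0$: the walk is determined by choosing $c_0$ and then an ordered pair $(v_1,v_2)$ of neighbours of $c_0$. This gives $\sum_i d_{c,i}^2$ walks, covering types (b) and (c).
\item Walks with $v_1 = v_2$ and $c_1 \neq c_0$: choose the variable $v=v_1$, then an ordered pair $(c_0,c_1)$ of distinct checks adjacent to $v$. This gives $\sum_v d_v(d_v-1)$ walks.
\end{itemize}
These two families are disjoint and together exhaust all degenerate walks. Type (c) is counted only in the first family, because the second family is restricted to $c_1\ne c_0$. This bookkeeping is the one point to check carefully, since double-subtracting type (c) is the natural pitfall.

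Next I count the non-degenerate walks. Each $4$-cycle $c_0 v_1 c_1 v_2$ has two check vertices at which a walk can start, and two directions of traversal. So it is counted exactly $4$ times as a closed walk starting on the check side. Conversely, every non-degenerate walk traces out a $4$-cycle, because all four of its vertices are distinct.

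Combining the two steps gives
\[
\operatorname{tr}\bigl((HH^T)^2\bigr) = 4C_4 + \sum_i d_{c,i}^2 + \sum_v d_v(d_v-1),
\]
and solving for $C_4$ gives the stated formula.

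The only delicate step is the disjoint partition of the degenerate walks above; the rest is routine counting.
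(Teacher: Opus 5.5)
Your proof is correct and is essentially the paper's argument, phrased in terms of closed walks rather than matrix entries. Your three classes correspond exactly to the paper's split of $\operatorname{tr}(A^2)=\sum_{i,j}A_{ij}^2$, where $A=HH^T$ and $\lambda_{ij}=(HH^T)_{ij}$: walks with $c_1=c_0$ give the diagonal part $\sum_i d_{c,i}^2$, walks with $v_1=v_2$ and $c_1\neq c_0$ give $\sum_{i\neq j}\lambda_{ij}=\sum_v d_v(d_v-1)$, and non-degenerate walks give $\sum_{i\neq j}\lambda_{ij}(\lambda_{ij}-1)=4C_4$, with the overlap case handled correctly.
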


\begin{proof}
Write $A = HH^T$.
The diagonal entry $A_{ii} = d_{c,i}$ and the off-diagonal entry $A_{ij}$ for $i \neq j$ counts the number of common variable-node neighbours of check nodes $i$ and $j$; call this $\lambda_{ij}$.
A 4-cycle is determined by a pair of check nodes $\{i, j\}$ and a pair of their common variable neighbours, so $C_4 = \sum_{i < j} \binom{\lambda_{ij}}{2}$.
Now $\operatorname{tr}(A^2) = \sum_{i,j} A_{ij}^2 = \sum_i d_{c,i}^2 + \sum_{i \neq j} \lambda_{ij}^2$, so $\sum_{i \neq j} \lambda_{ij}^2 = \operatorname{tr}(A^2) - \sum_i d_{c,i}^2$.
Similarly, $\sum_{i \neq j} \lambda_{ij} = \sum_{i,j} A_{ij} - \sum_i A_{ii} = \sum_v d_v^2 - \sum_v d_v = \sum_v d_v(d_v - 1)$, where the identity $\sum_{i,j} A_{ij} = \sum_v d_v^2$ holds because each variable node $v$ contributes~$1$ to $A_{ij}$ for each of the $\binom{d_v}{1}^2 = d_v^2$ pairs $(i,j)$ of check nodes it is adjacent to.
Therefore
\[
    C_4 = \sum_{i < j} \tfrac{1}{2}(\lambda_{ij}^2 - \lambda_{ij})
    = \tfrac{1}{4}\bigl(\operatorname{tr}(A^2) - \textstyle\sum_i d_{c,i}^2\bigr) - \tfrac{1}{4} \textstyle\sum_v d_v(d_v - 1),
\]
using $\sum_{i<j} = \frac{1}{2}\sum_{i \neq j}$.
\end{proof}

\begin{prop}[4-cycle formula for GB codes]
\label{prop:4cycles}
Let $H_X = [\widetilde{A} \mid \widetilde{B}]$ be the parity check matrix of a generalised bicycle code over $\mathbb{Z}_\ell$ with $\operatorname{wt}(a) = w_a$ and $\operatorname{wt}(b) = w_b$.
The Tanner graph of $H_X$ has $\ell$ check nodes, each of degree $d_c = w_a + w_b$, and $2\ell$ variable nodes: $\ell$ of degree $w_a$ (from the $\widetilde{A}$ block) and $\ell$ of degree $w_b$ (from the $\widetilde{B}$ block).
The number of 4-cycles is
\begin{equation}\label{eq:4cycles}
    C_4 = \frac{\ell}{4}\bigl(E(S_a) + 2\,E(S_a, S_b) + E(S_b)\bigr)
    - \frac{\ell \bigl((w_a + w_b)^2 + w_a(w_a - 1) + w_b(w_b - 1)\bigr)}{4}.
\end{equation}
\end{prop}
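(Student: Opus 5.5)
The plan is to combine three earlier results: Lemma~\ref{lem:4cycle-trace}, the scalar trace formula of Corollary~\ref{cor:trace-gb} at $k=2$, and the Parseval-type energy identities of Lemma~\ref{lem:parseval-energy}.

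First I record the degree data. $H_X=[\widetilde A\mid\widetilde B]$ is an $\ell\times 2\ell$ matrix. Each row of the circulant $\widetilde A$ (resp.\ $\widetilde B$) has weight $w_a$ (resp.\ $w_b$), and so does each column. Hence every check node has degree $w_a+w_b$. The $\ell$ variable nodes of the first block have degree $w_a$, and the $\ell$ of the second block have degree $w_b$. Therefore
\[
\sum_i d_{c,i}^2=\ell(w_a+w_b)^2,\qquad \sum_v d_v(d_v-1)=\ell\bigl(w_a(w_a-1)+w_b(w_b-1)\bigr).
\]
This already produces the subtracted term in~\eqref{eq:4cycles}.

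Second, I compute $\operatorname{tr}((H_XH_X^T)^2)$. Corollary~\ref{cor:trace-gb} with $k=2$ gives
\[
\operatorname{tr}\bigl((H_XH_X^T)^2\bigr)=\sum_{j}\bigl(|a(\omega^j)|^2+|b(\omega^j)|^2\bigr)^2 .
\]
I expand the square as $|a|^4+2|a|^2|b|^2+|b|^4$. Applying Lemma~\ref{lem:parseval-energy}(i) to $S_a$ and to $S_b$, and (ii) to the cross term, gives $\ell\bigl(E(S_a)+2E(S_a,S_b)+E(S_b)\bigr)$. Substituting both pieces into Lemma~\ref{lem:4cycle-trace} and dividing by $4$ yields~\eqref{eq:4cycles}.

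The only point requiring care is conventions, since the GB code in Example~\ref{ex:gb} is written $[A\mid B]$, whereas Theorem~\ref{prop:spectral-trace} was set up for $[\widetilde H_1\mid I\otimes\widetilde B^T]$. Replacing $B^T$ by $B$ only conjugates $b(\omega^j)$, and $|b(\omega^j)|^2$ is unchanged. If needed, I would instead verify the trace directly: $H_XH_X^T=AA^T+BB^T$ is circulant with eigenvalues $|a(\omega^j)|^2+|b(\omega^j)|^2$. Since $a$ and $b$ are $0$--$1$ polynomials, the hypothesis $a(x)=\sum_{s\in S_a}x^s$ of Lemma~\ref{lem:parseval-energy} holds exactly, so no further obstacle arises.
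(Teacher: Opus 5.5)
Your proposal is correct and matches the paper's proof: you apply the scalar trace formula of Corollary~\ref{cor:trace-gb} at $k=2$, expand the square, turn each term into an energy or cross-energy via Lemma~\ref{lem:parseval-energy}, and substitute this together with the degree sums into Lemma~\ref{lem:4cycle-trace}. Your remark on the $[A\mid B]$ versus $[\widetilde H_1\mid I\otimes\widetilde B^T]$ convention is a worthwhile check that the paper leaves implicit: only $|b(\omega^j)|^2$ enters, or one can verify directly that $H_XH_X^T=AA^T+BB^T$.
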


\begin{proof}
We use~\eqref{eq:trace-gb} with $k = 2$:
\[
    \operatorname{tr}\bigl((H_X H_X^T)^2\bigr) = \sum_{j=0}^{\ell-1} \bigl(|a(\omega^j)|^2 + |b(\omega^j)|^2\bigr)^2.
\]
Expanding the square and applying Lemma~\ref{lem:parseval-energy}:
\begin{align*}
    \operatorname{tr}\bigl((H_X H_X^T)^2\bigr)
    &= \sum_{j} |a(\omega^j)|^4 + 2\sum_j |a(\omega^j)|^2|b(\omega^j)|^2 + \sum_j |b(\omega^j)|^4 \\
    &= \ell\, E(S_a) + 2\ell\, E(S_a, S_b) + \ell\, E(S_b).
\end{align*}
Substituting into Lemma~\ref{lem:4cycle-trace} with $m = \ell$, $\sum_i d_{c,i}^2 = \ell(w_a + w_b)^2$, and $\sum_v d_v(d_v - 1) = \ell\, w_a(w_a - 1) + \ell\, w_b(w_b - 1)$ gives the desired result~\eqref{eq:4cycles}.
\end{proof}

\begin{rem}
Proposition~\ref{prop:4cycles} shows that the girth of a GB code Tanner graph is at least~6 if and only if
\[
    E(S_a) + 2\,E(S_a, S_b) + E(S_b) = (w_a + w_b)^2 + w_a(w_a - 1) + w_b(w_b - 1).
\]
The right-hand side is the contribution from ``trivial'' 4-tuples (those where the two pairs of indices coincide).
The following corollary reformulates this as a condition from additive combinatorics.
\end{rem}

\begin{cor}[Joint Sidon condition for girth-6 GB codes]
\label{cor:sidon}
A generalised bicycle code over $\mathbb{Z}_\ell$ with polynomials $a(x), b(x)$ has girth at least~$6$ if and only if
\begin{equation}\label{eq:sidon}
    r_d(S_a) + r_d(S_b) \leq 1 \quad \text{for all } d \not\equiv 0 \pmod{\ell},
\end{equation}
where $r_d(S) = |\{(x,y) \in S^2 : x - y \equiv d \pmod{\ell}\}|$.
In other words, the difference multisets of $S_a$ and $S_b$, restricted to nonzero differences, must be disjoint and repeat-free when combined.
\end{cor}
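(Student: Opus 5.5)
The plan is to read the condition $C_4 = 0$ directly off Proposition~\ref{prop:4cycles} by regrouping the energies into a single sum over differences. Since the Tanner graph is bipartite, its girth is at least~$6$ if and only if $C_4 = 0$. So it suffices to show that the right-hand side of~\eqref{eq:4cycles} vanishes exactly when~\eqref{eq:sidon} holds.

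First, I will write $s_d = r_d(S_a) + r_d(S_b)$ for $d \in \mathbb{Z}_\ell$. By Definition~\ref{def:additive-energy}, $E(S_a) = \sum_d r_d(S_a)^2$, $E(S_b) = \sum_d r_d(S_b)^2$ and $E(S_a,S_b) = \sum_d r_d(S_a) r_d(S_b)$. Hence
\[
E(S_a) + 2E(S_a,S_b) + E(S_b) = \sum_{d \in \mathbb{Z}_\ell} s_d^2 .
\]
Next, I split off the term $d = 0$. Here $r_0(S_a) = w_a$ and $r_0(S_b) = w_b$, so this term equals $(w_a+w_b)^2$, which cancels the first subtracted quantity in~\eqref{eq:4cycles}. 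For the remaining terms I use the counting identity $\sum_{d} r_d(S) = |S|^2$. It gives
\[
\sum_{d \not\equiv 0} s_d = w_a(w_a-1) + w_b(w_b-1),
\]
which is exactly the other subtracted quantity. Substituting both facts into~\eqref{eq:4cycles} should yield
\[
C_4 = \frac{\ell}{4} \sum_{d \not\equiv 0 \pmod{\ell}} s_d(s_d - 1).
\]

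Each $s_d$ is a nonnegative integer, so every summand $s_d(s_d-1)$ is nonnegative. A summand vanishes exactly when $s_d \in \{0,1\}$. Therefore $C_4 = 0$ if and only if $s_d \le 1$ for all $d \not\equiv 0$, which is~\eqref{eq:sidon}. I will then unpack the final sentence of the corollary. The condition $s_d \le 1$ says that no nonzero difference occurs twice within $S_a$, none occurs twice within $S_b$, and none occurs in both sets. In other words, the combined multiset of nonzero differences is repeat-free.

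I do not expect a real obstacle here; the work is bookkeeping. The step that needs the most care is the cancellation of the degree terms. One must check that $\sum_d r_d(S)^2$ counts ordered pairs with the same convention as Lemma~\ref{lem:4cycle-trace}, including the symmetry $r_{-d} = r_d$ from Remark~\ref{rem:cross-energy-convention}. One must also check that the $d=0$ and $d \ne 0$ contributions match $\sum_i d_{c,i}^2$ and $\sum_v d_v(d_v-1)$ exactly. The case $d = \ell/2$ for even $\ell$ needs no separate treatment, because the condition is imposed on every nonzero $d$ individually.
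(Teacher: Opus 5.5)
Your proposal is correct and is essentially the paper's proof: both rewrite $E(S_a)+2E(S_a,S_b)+E(S_b)$ as $\sum_d s_d^2$, split off the $d=0$ term $(w_a+w_b)^2$, use $\sum_{d\not\equiv 0} r_d(S)=|S|(|S|-1)$ to cancel the remaining degree term of Proposition~\ref{prop:4cycles}, and finish with $x^2\ge x$ for nonnegative integers. Your explicit form $C_4=\frac{\ell}{4}\sum_{d\not\equiv 0}s_d(s_d-1)$ is just a slightly tidier packaging of the same computation, and your remark that $d=\ell/2$ needs no separate treatment is right, since a single pair $\{x,x+\ell/2\}$ already forces $r_{\ell/2}\ge 2$.
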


\begin{proof}
By Proposition~\ref{prop:4cycles}, $C_4 = 0$ if and only if $E(S_a) + 2\,E(S_a, S_b) + E(S_b) = (w_a + w_b)^2 + w_a(w_a - 1) + w_b(w_b - 1)$.
Since $E(S_a) + 2\,E(S_a, S_b) + E(S_b) = \sum_d (r_d(S_a) + r_d(S_b))^2$, the left-hand side decomposes as
\[
    (r_0(S_a) + r_0(S_b))^2 + \sum_{d \neq 0} (r_d(S_a) + r_d(S_b))^2 = (w_a + w_b)^2 + \sum_{d \neq 0} (r_d(S_a) + r_d(S_b))^2,
\]
using $r_0(S) = |S|$.
The right-hand side equals $(w_a + w_b)^2 + w_a(w_a - 1) + w_b(w_b - 1)$.
Since $\sum_{d \neq 0} r_d(S_a) = w_a^2 - w_a = w_a(w_a - 1)$ and likewise for $S_b$, the condition $C_4 = 0$ reduces to
\[
    \sum_{d \neq 0} (r_d(S_a) + r_d(S_b))^2 = \sum_{d \neq 0} (r_d(S_a) + r_d(S_b)).
\]
For non-negative integers $x$, $x^2 \geq x$ with equality iff $x \in \{0, 1\}$, so $$\sum_{d \neq 0}\bigl((r_d(S_a)+r_d(S_b))^2 - (r_d(S_a)+r_d(S_b))\bigr) = 0$$ with each summand non-negative forces every summand to vanish, giving~\eqref{eq:sidon}.
\end{proof}

\begin{rem}
When $S_b = \emptyset$ (i.e.\ a single circulant block), condition~\eqref{eq:sidon} reduces to $r_d(S_a) \leq 1$ for all $d \neq 0$, which is the classical definition of a \emph{Sidon set} (or $B_2$ set) in $\mathbb{Z}_\ell$.
For a Sidon set $S \subset \mathbb{Z}_\ell$, counting nonzero differences gives $|S|(|S| - 1) \leq \ell - 1$, so $|S| \leq \sqrt{\ell} + O(1)$ (cf.\ the Erd\H{o}s--Tur\'an bound for Sidon sets of integers~\cite{erdosturan1941}).
For the joint condition~\eqref{eq:sidon}, the combined difference set $\{x - y : (x,y) \in S_a^2 \cup S_b^2,\, x \neq y\}$ must have no repeated elements, so
\[
    w_a(w_a - 1) + w_b(w_b - 1) \leq \ell - 1,
\]
giving an upper bound on the total weight $w_a + w_b$ achievable at girth~6 for a given $\ell$.
\end{rem}

\begin{rem}[Towards $6$-cycle counts via additive $3$-energies]\label{rem:6cycles}
The same DFT identity extends to closed walks of length~$6$: by Theorem~\ref{prop:spectral-trace} with $k = 3$,
\[
    \operatorname{tr}\bigl((H_X H_X^T)^3\bigr) \;=\; \sum_{j=0}^{\ell-1}\bigl(|a(\omega^j)|^2 + |b(\omega^j)|^2\bigr)^3.
\]
Expanding the cube and applying the same Parseval calculation as in Lemma~\ref{lem:parseval-energy} (using the Parseval identity~\eqref{eq:parseval} of the spectral background \S\ref{sec:spectral-background}) expresses each term via the \emph{additive $3$-energy}
\[
    T_3(S) \;=\; \bigl|\bigl\{(i_1,k_1,i_2,k_2,i_3,k_3) \in S^6 \;:\; (i_1-k_1) + (i_2-k_2) + (i_3-k_3) \equiv 0 \pmod{\ell}\bigr\}\bigr|,
\]
together with mixed cross-versions for $S_a$ and $S_b$, via the identity $\sum_j |a(\omega^j)|^6 = \ell\, T_3(S_a)$ (and analogously for higher mixed terms).
Note that $T_3(S) \neq \sum_d r_d(S)^3$ in general: writing $r_d(S)$ for the number of representations $d = i - k$ with $i,k \in S$, one has $T_3(S) = \sum_{d_1 + d_2 + d_3 \equiv 0 \!\pmod{\ell}} r_{d_1}(S)\,r_{d_2}(S)\,r_{d_3}(S)$, which counts \emph{all} difference-triples summing to $0$ modulo $\ell$, whereas $\sum_d r_d(S)^3$ counts only those with $d_1 = d_2 = d_3$; neither sum is contained in the other.
Translating $\operatorname{tr}\bigl((H_X H_X^T)^3\bigr)$ into a $6$-cycle count requires subtracting closed length-$6$ walks that are not simple cycles; under girth $\geq 6$ (i.e.\ assuming the Sidon condition~\eqref{eq:sidon}) these reduce to backtracking walks whose count is fixed by the local degree statistics of the Tanner graph.
The resulting girth-$8$ criterion is then a higher-order arithmetic condition on the supports, analogous to Corollary~\ref{cor:sidon} but strictly stronger; we do not work out the full bookkeeping here.
\end{rem}

\subsection{Spectral interpretation of the Wang-Dolecek-Wesel reduction in the QC-GHP setting}\label{sec:absorbing-spectral}

The cycle-counting formulas of Section~\ref{sec:spectral-cycles} connect to absorbing sets, the dominant graphical cause of decoder failure, through a classical bijection of Wang, Dolecek and Wesel~\cite{wangdolecekwesel2012}. Recast in graph-theoretic language and combined with Theorem~\ref{prop:spectral-trace}, it yields a closed-form spectral expression for the number of $(3, 3)$ elementary absorbing sets in a QC-GHP code, addressing in the column-weight-$3$ case an enumeration question raised by Morris, Pllaha and Kelley~\cite{morris2024} for QLDPC codes with circulant base matrices.

Throughout, assume the Tanner graph $G$ has constant variable-node degree $\gamma = 3$ and girth at least $6$, which covers the QC-GHP family above when $H_1$ has column weight three and $\operatorname{wt}(b) = 3$. Recall the variable-node graph $G^{VN}$ from Definition~\ref{def:vng}; under girth $\geq 6$ it is simple.

\begin{lem}[Wang-Dolecek-Wesel \cite{wangdolecekwesel2012}, in $G^{VN}$ language]\label{lem:33-triangle}
For a Tanner graph $G$ with $\gamma = 3$ and $g(G) \geq 6$, the assignment $A \mapsto A$ is a bijection between
\begin{enumerate}
    \item[\textnormal{(a)}] elementary $(3, 3)$-absorbing sets of $G$, and
    \item[\textnormal{(b)}] simple $6$-cycles of $G$, equivalently the triangles $K_3$ in $G^{VN}$ realised by three pairwise distinct checks.
\end{enumerate}
A triangle of $G^{VN}$ realised by a single check of degree $\geq 3$ is not absorbing: that check has odd degree in $G_A$, so each of its three variables would have three odd and no even checks, violating the strong-majority condition.
\end{lem}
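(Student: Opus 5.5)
The plan is a direct combinatorial analysis of an elementary $(3,3)$-absorbing set $A=\{v_1,v_2,v_3\}$, done in both directions, followed by a separate treatment of the degenerate single-check triangle. Throughout, the two hypotheses do the work: $\gamma=3$ and $g(G)\ge 6$.

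\emph{(a)$\Rightarrow$(b).} Let $A$ be an elementary $(3,3)$-absorbing set. Since $\gamma=3$, there are $9$ edges from $A$ into $N(A)$. Every check in $G_A$ has degree $1$ or $2$, and exactly $b=3$ of them have degree $1$. Hence the degree-$2$ checks number $(9-3)/2=3$.

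Now apply the strong-majority condition $e_v>o_v$ with $e_v+o_v=3$. This forces $e_v\ge 2$ for every $v$. Summing gives $\sum_v e_v\ge 6$, while double counting the three degree-$2$ checks gives $\sum_v e_v = 2\cdot 3=6$. So $e_v=2$ and $o_v=1$ for every $v$.

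Each degree-$2$ check joins a pair of variables, giving a multigraph on $\{v_1,v_2,v_3\}$ with three edges in which every vertex has degree $2$. Two parallel edges would mean two variables share two checks, which is a $4$-cycle in $G$ and is excluded by $g\ge 6$. Loops are impossible since the checks have degree $2$ in $G_A$, not a repeated endpoint. So the multigraph is the triangle $K_3$. Its three edges are realised by three distinct checks, and interleaving variables and checks gives a simple $6$-cycle $v_1c_{12}v_2c_{23}v_3c_{31}v_1$ in $G$.

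\emph{(b)$\Rightarrow$(a).} Conversely, take a simple $6$-cycle through $v_1,v_2,v_3$ and distinct checks $c_{12},c_{23},c_{31}$. Each variable has exactly one further check, since $\gamma=3$ and two of its checks lie on the cycle.

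The key step is to show that the three extra checks are distinct from one another and from the cycle checks. I would argue this by exhibiting a $4$-cycle whenever two checks coincide.
\begin{itemize}
\item If the extra check of $v_1$ equals $c_{23}$, then $v_1$ is adjacent to $c_{23}$, and $v_1,c_{23},v_2,c_{12}$ is a $4$-cycle.
\item If the extra checks of $v_1$ and $v_2$ coincide, then $v_1$ and $v_2$ share two checks, again a $4$-cycle.
\end{itemize}
Hence $G_A$ has three degree-$2$ checks and three degree-$1$ checks. The set is therefore elementary with $b=3$, and each $v$ has $e_v=2>1=o_v$, so $A$ is absorbing.

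The two maps are mutually inverse because both take $A$ to $A$. The one check needed is that a set of three variables supports at most one such cycle. This holds because the three pairwise-shared checks are unique under girth $\ge 6$: two variables sharing two checks would form a $4$-cycle.

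\emph{Degenerate triangle.} For the final remark, suppose $v_1,v_2,v_3$ share a single check $c$. Then $c$ has degree $3$ in $G_A$, which is odd. By girth $\ge 6$ no two of the $v_i$ share another check, so each $v_i$ has two further checks of degree $1$ in $G_A$. Thus $o_v=3$ and $e_v=0$, violating the majority condition.

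The main obstacle is keeping the girth bookkeeping clean, in particular ruling out every coincidence of checks via $4$-cycles. Everything else is degree counting.
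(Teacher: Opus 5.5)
Your proposal is correct and follows essentially the same route as the paper's proof. The forward direction uses incidence counting together with strong majority, and the converse rules out every coincidence of checks by exhibiting a $4$-cycle. Along the way you make explicit a few points the paper leaves implicit: the double count that forces $e_v=2$ exactly, the fact that a given variable triple supports at most one $6$-cycle (needed for the bijection to be with cycles), and the verification of the degenerate single-check case.
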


\begin{figure}[ht]
\centering
\begin{subfigure}[t]{0.55\textwidth}
\centering
\begin{tikzpicture}[thick,
  varIn/.style={circle, draw=black, fill=ugentblue, minimum size=8pt, inner sep=0pt},
  chkE/.style={regular polygon, regular polygon sides=4, draw=black, fill=white, minimum size=9pt, inner sep=0pt},
  chkO/.style={regular polygon, regular polygon sides=4, draw=red, fill=red!25, minimum size=10pt, inner sep=0pt, very thick}]
\node[varIn] (u1) at (-1.4,  1.2) {};  \node[above left=-1pt] at (u1) {\scriptsize $u_1$};
\node[varIn] (u2) at ( 1.4,  1.2) {};  \node[above right=-1pt] at (u2) {\scriptsize $u_2$};
\node[varIn] (u3) at ( 0.0, -1.0) {};  \node[below=2pt] at (u3) {\scriptsize $u_3$};
\node[chkE] (c12) at ( 0.0,  1.2) {};  \node[above=1pt] at (c12) {\tiny $c_{12}$};
\node[chkE] (c13) at (-0.7,  0.1) {};  \node[left=1pt] at (c13) {\tiny $c_{13}$};
\node[chkE] (c23) at ( 0.7,  0.1) {};  \node[right=1pt] at (c23) {\tiny $c_{23}$};
\node[chkO] (o1) at (-2.4,  1.6) {};
\node[chkO] (o2) at ( 2.4,  1.6) {};
\node[chkO] (o3) at ( 0.0, -2.0) {};
\draw[ugentblue] (u1) -- (c12) -- (u2);
\draw[ugentblue] (u1) -- (c13) -- (u3);
\draw[ugentblue] (u2) -- (c23) -- (u3);
\draw[red, dashed] (u1) -- (o1);
\draw[red, dashed] (u2) -- (o2);
\draw[red, dashed] (u3) -- (o3);
\end{tikzpicture}
\caption{Elementary $(3, 3)$-absorbing set in the Tanner graph.}
\label{fig:33-tanner}
\end{subfigure}
\hfill
\begin{subfigure}[t]{0.40\textwidth}
\centering
\begin{tikzpicture}[thick,
  varIn/.style={circle, draw=black, fill=ugentblue, minimum size=8pt, inner sep=0pt}]
\node[varIn] (u1) at (-1.2,  0.7) {};  \node[above left=-1pt] at (u1) {\scriptsize $u_1$};
\node[varIn] (u2) at ( 1.2,  0.7) {};  \node[above right=-1pt] at (u2) {\scriptsize $u_2$};
\node[varIn] (u3) at ( 0.0, -1.0) {};  \node[below=2pt] at (u3) {\scriptsize $u_3$};
\draw[ugentblue] (u1) -- (u2);
\draw[ugentblue] (u1) -- (u3);
\draw[ugentblue] (u2) -- (u3);
\node[font=\scriptsize, above] at ( 0.0,  0.7) {$c_{12}$};
\node[font=\scriptsize, left ] at (-0.7, -0.1) {$c_{13}$};
\node[font=\scriptsize, right] at ( 0.7, -0.1) {$c_{23}$};
\end{tikzpicture}
\caption{Triangle $K_3$ on $\{u_1, u_2, u_3\}$ in $G^{VN}$.}
\label{fig:33-vn}
\end{subfigure}
\caption[The Wang-Dolecek-Wesel $(3, 3)$-absorbing $\leftrightarrow$ triangle bijection]{The Wang-Dolecek-Wesel correspondence of Lemma~\ref{lem:33-triangle}, illustrated. \textbf{(a)} An elementary $(3, 3)$-absorbing set in a column-weight-$3$ girth-$\geq 6$ Tanner graph: three variables $u_1, u_2, u_3$ (blue), three degree-$2$ even checks $c_{12}, c_{13}, c_{23}$ (white squares) each shared by exactly two variables, and three degree-$1$ odd checks (red squares) each attached to exactly one variable. \textbf{(b)} The corresponding triangle in the variable-node graph $G^{VN}$: each pair-check $c_{ij}$ becomes the edge $u_i\!-\!u_j$, and the absorbing structure is reduced to a single triangle. The bijection runs both ways (Lemma~\ref{lem:33-triangle}), translating $(3, 3)$-absorbing-set enumeration into the counting of triangles realised by three distinct checks.}
\label{fig:33-bijection}
\end{figure}

\begin{proof}
The correspondence is established in \cite{wangdolecekwesel2012} via the cycle-consistency-matrix formalism for separable circulant-based codes; we give a graph-theoretic restatement valid for any Tanner graph satisfying the hypotheses.

$(\Rightarrow)$ Let $A = \{u_1, u_2, u_3\}$ be an elementary $(3, 3)$-absorbing set. The total incidence count from $A$ is $\sum_i \gamma = 9$. By elementarity each check in $N(A)$ has degree $1$ or $2$ in $G_A$; the $b = 3$ degree-$1$ checks contribute $3$ incidences, leaving $6$ incidences from $3$ degree-$2$ checks, so $|N(A)| = 6$. The strong-majority condition forces each $u_i$ to have at least $2$ even-degree-check neighbours in $G_A$, so (with $\gamma = 3$) exactly $2$ degree-$2$ and $1$ degree-$1$ check neighbour. Thus the $3$ degree-$2$ checks form a triangle on $\{u_1, u_2, u_3\}$ in $G^{VN}$, realised by three pairwise distinct checks.

$(\Leftarrow)$ Conversely, given a triangle $\{u_1, u_2, u_3\}$ in $G^{VN}$ realised by three pairwise distinct checks $c_{12}, c_{13}, c_{23}$ (equivalently, a simple $6$-cycle of $G$), each $u_i$ has its three checks split as the two pair-checks $c_{ij}, c_{ik}$ ($j, k \neq i$) and one extra check $c''_i$. The check $c''_i$ cannot also be incident to $u_j$ for any $j \neq i$, else $u_i$ and $u_j$ would share two distinct checks ($c_{ij}$ and $c''_i$), giving a $4$-cycle and contradicting $g(G) \geq 6$. Thus $c''_i$ has degree exactly~$1$ in $G_A$. The same argument shows $c''_i \neq c''_j$ for $i \neq j$. So $|N(A)| = 6$, with $3$ degree-$2$ and $3$ degree-$1$ checks; each $u_i$ has $2$ even-degree and $1$ odd-degree check neighbour in $G_A$, satisfying strong majority. So $A$ is an elementary $(3, 3)$-absorbing set.
\end{proof}

The variable-node graph admits a clean spectral description.

\begin{lem}\label{lem:VN-trace}
Let $G$ be a Tanner graph with biadjacency matrix $H$, constant variable-node degree $\gamma$, and $g(G) \geq 6$. Then
\[
    A_{VN} \;=\; H^T H \;-\; \gamma\, I,
\]
and consequently for every $k \geq 0$
\begin{equation}\label{eq:AVN-trace}
    \operatorname{tr}(A_{VN}^k) \;=\; \sum_{l=0}^{k} \binom{k}{l}\,(-\gamma)^{k-l}\,\operatorname{tr}\bigl((H^T H)^l\bigr).
\end{equation}
\end{lem}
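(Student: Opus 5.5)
The plan is to compare $H^T H$ with the adjacency matrix $A_{VN}$ of $G^{VN}$ entry by entry, using the same bookkeeping as in the proof of Lemma~\ref{lem:4cycle-trace} but with the roles of checks and variables swapped. Index the columns of $H$ by the variable set $V$. For $u,v \in V$, the entry $(H^T H)_{uv} = \sum_c H_{cu} H_{cv}$ counts checks adjacent to both $u$ and $v$.

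\emph{Diagonal entries.} For $u = v$ this number is $\deg(u) = \gamma$ by the constant-column-weight hypothesis. Hence the diagonal of $H^T H - \gamma I$ vanishes, matching the zero diagonal of $A_{VN}$: the graph $G^{VN}$ is simple, so it has no loops.

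\emph{Off-diagonal entries.} For $u \neq v$, the entry $(H^T H)_{uv}$ is the number of common check neighbours of $u$ and $v$. If two distinct checks $c \neq c'$ were both adjacent to $u$ and $v$, then $u\,c\,v\,c'\,u$ would be a $4$-cycle in $G$, contradicting $g(G) \geq 6$. So $(H^T H)_{uv} \in \{0,1\}$, and it equals $1$ exactly when some check joins $u$ and $v$. By Definition~\ref{def:vng}, that is exactly the condition $(A_{VN})_{uv} = 1$. This gives the matrix identity $A_{VN} = H^T H - \gamma I$. The girth hypothesis is what guarantees entries in $\{0,1\}$; without it one would only obtain a weighted multigraph adjacency matrix. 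This is the only substantive step, and it is routine.

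\emph{Trace formula.} Since $\gamma I$ is scalar, it commutes with $H^T H$, so the binomial theorem applies:
\[
A_{VN}^k = \sum_{l=0}^{k} \binom{k}{l} (-\gamma)^{k-l} (H^T H)^l ,
\]
with the convention $(H^TH)^0 = I$. Taking traces and using linearity gives \eqref{eq:AVN-trace}. One may also note that $\operatorname{tr}((H^T H)^l) = \operatorname{tr}((HH^T)^l)$ for $l \geq 1$ by Lemma~\ref{lem:bipartite-trace}, so Theorem~\ref{prop:spectral-trace} can evaluate these terms. The $l = 0$ term is $|V|$, not the number of checks, so it must be kept separately; this is the only bookkeeping care needed.
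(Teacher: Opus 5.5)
Your proposal is correct and follows essentially the same route as the paper. Both compare $H^TH$ with $A_{VN}$ entrywise: the diagonal entries are $\gamma$, and the off-diagonal entries lie in $\{0,1\}$ because a second common check would close a $4$-cycle. Both then expand $(H^TH-\gamma I)^k$ binomially and use linearity of trace. Your extra remarks, that $\gamma I$ commutes with $H^TH$ and that the $l=0$ term equals $|V|$ rather than the number of checks, are accurate and match the paper's later use of $\operatorname{tr}((H_X^TH_X)^0)=(n_1+r_1)\ell$.
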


\begin{proof}
The off-diagonal entry $(H^T H)_{u, u'}$ counts the common check-neighbours of $u, u'$, which is $0$ or $1$ under $g(G) \geq 6$ (else a $4$-cycle); the diagonal entry equals the variable-node degree $\gamma$. So $H^T H = A_{VN} + \gamma I$. Identity~\eqref{eq:AVN-trace} follows by binomial expansion of $A_{VN}^k = (H^T H - \gamma I)^k$ and linearity of trace.
\end{proof}

For QC-GHP codes the right-hand side of~\eqref{eq:AVN-trace} is fully spectral: $$\operatorname{tr}((H_X^T H_X)^l) = \operatorname{tr}((H_X H_X^T)^l) = \sum_{j=0}^{\ell-1}\operatorname{tr}(M_j^l)$$ for $l \geq 1$ by Theorem~\ref{prop:spectral-trace}, while $$\operatorname{tr}((H_X^T H_X)^0) = \operatorname{tr}(I_{(n_1+r_1)\ell}) = (n_1 + r_1)\ell.$$

Triangles in $G^{VN}$ recover the $6$-cycle count of $G$:

\begin{lem}\label{lem:VN-triangles}
For $G$ as above, the triangles of $G^{VN}$ are of two kinds: those realised by three distinct checks, which are in bijection with the simple $6$-cycles of $G$, and those realised by a single check of degree at least $3$. Consequently
\[
    \#K_3(G^{VN}) \;=\; N_6(G) \;+\; \sum_{c \in W}\binom{\deg(c)}{3},
    \]
 or equivalently
\[    
    N_6(G) \;=\; \tfrac{1}{6}\operatorname{tr}(A_{VN}^3) \;-\; \sum_{c \in W}\binom{\deg(c)}{3}.
\]
\end{lem}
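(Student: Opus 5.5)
The proof is a direct classification of triangles in $G^{VN}$, followed by a bijection count and the standard trace identity for triangles.

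Take a triangle $\{u_1,u_2,u_3\}$ in $G^{VN}$. By Definition~\ref{def:vng}, each edge $u_iu_j$ is witnessed by some check adjacent to both endpoints. Since $g(G)\ge 6$, that witness is unique: two distinct common checks of $u_i,u_j$ would form a $4$-cycle. So each of the three edges carries a well-defined check $c_{ij}$, and there are two cases.

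\textbf{Case 1: the three checks are pairwise distinct.} Then $u_1c_{12}u_2c_{23}u_3c_{13}u_1$ is a closed walk of length $6$ with six distinct vertices, so it is a simple $6$-cycle.

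\textbf{Case 2: two of the checks coincide.} Suppose, say, $c_{12}=c_{13}=c$. Then $c$ is adjacent to $u_1,u_2,u_3$, so $c$ is also a common check of $u_2$ and $u_3$. By uniqueness, $c_{23}=c$ as well. Hence all three checks are equal, and that check has degree at least $3$.

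The two cases are exclusive, because the labelling $(c_{12},c_{13},c_{23})$ is determined by the triangle.

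Next, count each type.

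\textbf{Type 1 triangles versus $6$-cycles.} Every simple $6$-cycle of $G$ alternates $u,c,u',c',u'',c''$ with three distinct variables and three distinct checks, and so determines a Type 1 triangle. Conversely, a Type 1 triangle determines its cycle uniquely, since the checks are forced by uniqueness. The map is therefore a bijection onto the unordered cycles, giving exactly $N_6(G)$ triangles of this kind.

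\textbf{Type 2 triangles versus checks.} A Type 2 triangle is exactly a $3$-subset of $N(c)$ for some check $c$. The check $c$ is uniquely determined by the triangle, again by uniqueness of common checks. So these triangles number $\sum_{c\in W}\binom{\deg(c)}{3}$. Adding the two counts gives the first identity.

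For the second identity, $G^{VN}$ is simple under girth $\ge6$ (Definition~\ref{def:vng}), and $A_{VN}$ is its $0$-$1$ adjacency matrix by Lemma~\ref{lem:VN-trace}. By Theorem~\ref{thm:tr-walks}, $\operatorname{tr}(A_{VN}^3)$ counts closed walks of length $3$. In a simple loopless graph such walks cannot backtrack, so each is a triangle traversed from one of $3$ starting vertices in one of $2$ directions. Hence $\#K_3(G^{VN})=\tfrac16\operatorname{tr}(A_{VN}^3)$, and rearranging gives the formula for $N_6(G)$.

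The step that needs the most care is the Case 2 analysis: showing that two coinciding checks force the third to coincide. This is exactly where the hypothesis $g(G)\ge6$ enters.
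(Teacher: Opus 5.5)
Your proof is correct and follows essentially the same route as the paper: uniqueness of the witnessing check (from $g(G)\ge 6$) forces the three edge-checks of a triangle to be either pairwise distinct or all equal. The distinct case is in bijection with simple $6$-cycles, the all-equal case gives exactly the $\sum_{c}\binom{\deg(c)}{3}$ neighbour-triples without double counting, and $\tfrac16\operatorname{tr}(A_{VN}^3)$ counts the triangles of the simple graph $G^{VN}$.
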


\begin{proof}
Each edge of a triangle $\{u,v,w\}$ in $G^{VN}$ is realised by a check: $u,v$ share $c_{uv}$, and likewise $c_{vw}, c_{uw}$, each pair sharing exactly one check since $g(G)\geq 6$. If two of these coincide, say $c_{uv}=c_{vw}=c$, then $c$ is adjacent to all of $u,v,w$, so $c=c_{uw}$ as well; hence the three checks are either pairwise distinct or all equal.
If they are pairwise distinct, the closed walk $u\,c_{uv}\,v\,c_{vw}\,w\,c_{uw}\,u$ is a simple $6$-cycle (a repeated vertex would force a cycle shorter than $6$); conversely every simple $6$-cycle of the bipartite graph $G$ alternates variable/check and projects to such a triangle, so this is a bijection.
If the three checks are all equal to one check $c$, then $\deg(c)\geq 3$ and $\{u,v,w\}$ is one of its $\binom{\deg(c)}{3}$ neighbour-triples; no such triangle is a $6$-cycle, and none is counted for two different checks (two checks adjacent to the same three variables would create a $4$-cycle, impossible at $g(G)\geq 6$), giving $\sum_{c}\binom{\deg(c)}{3}$ single-check triangles.
Since $A_{VN}$ is the adjacency matrix of the simple graph $G^{VN}$, $\tfrac16\operatorname{tr}(A_{VN}^3)=\#K_3(G^{VN})$, and the two displayed identities follow.
\end{proof}

Combining Lemmas~\ref{lem:33-triangle}, \ref{lem:VN-trace} and~\ref{lem:VN-triangles} with Theorem~\ref{prop:spectral-trace} gives the main result of this subsection: in the QC-GHP setting, the count of $(3, 3)$ elementary absorbing sets, equivalently the $6$-cycle count $N_6$, admits an explicit closed-form Fourier expression.

\begin{prop}[$(3,3)$-absorbing-set spectral count for QC-GHP codes]\label{prop:33-spectral}
Let $G_X$ be the $X$-Tanner graph of a QC-GHP code over $\mathbb{Z}_\ell$ with $\gamma = 3$ and $g(G_X) \geq 6$. Then
\[
    \#\,(\text{elementary }(3, 3)\text{-absorbing sets}) \;=\; N_6(G_X),
\]
\noindent and
{\footnotesize{
\begin{equation}\label{eq:N6-spectral}
    N_6(G_X) \;=\; \frac{1}{6}\!\left[\,\sum_{j=0}^{\ell-1}\operatorname{tr}(M_j^3) \;-\; 3\gamma\sum_{j=0}^{\ell-1}\operatorname{tr}(M_j^2) \;+\; 3\gamma^2\sum_{j=0}^{\ell-1}\operatorname{tr}(M_j) \;-\; \gamma^3\,(n_1 + r_1)\,\ell\,\right] \;-\; \sum_{c \in W}\binom{\deg(c)}{3}.
\end{equation}
}}
\end{prop}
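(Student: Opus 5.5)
The proof chains the three preceding lemmas with Theorem~\ref{prop:spectral-trace}. First, the equality between the number of elementary $(3,3)$-absorbing sets and $N_6(G_X)$ is exactly Lemma~\ref{lem:33-triangle}. Its hypotheses, $\gamma=3$ and $g(G_X)\ge 6$, are assumed in the statement. Under these hypotheses $A\mapsto A$ is a bijection onto the triangles of $G^{VN}$ realised by three pairwise distinct checks. By Lemma~\ref{lem:VN-triangles} these triangles are in bijection with the simple $6$-cycles of $G_X$. Composing the two bijections gives the first displayed equality.

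For the formula~\eqref{eq:N6-spectral}, I start from the second identity of Lemma~\ref{lem:VN-triangles}:
\[
N_6(G_X)=\tfrac16\operatorname{tr}(A_{VN}^3)-\sum_{c\in W}\binom{\deg(c)}{3}.
\]
I then expand $\operatorname{tr}(A_{VN}^3)$ using Lemma~\ref{lem:VN-trace} with $k=3$, which applies because the variable degree is constant ($\gamma=3$) and the girth is at least~$6$. This gives
\[
\operatorname{tr}\bigl((H_X^TH_X)^3\bigr)-3\gamma\operatorname{tr}\bigl((H_X^TH_X)^2\bigr)+3\gamma^2\operatorname{tr}\bigl(H_X^TH_X\bigr)-\gamma^3\operatorname{tr}(I).
\]

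Each term is then made spectral. For $l\ge1$, cyclic invariance of trace gives $\operatorname{tr}((H_X^TH_X)^l)=\operatorname{tr}((H_X H_X^T)^l)$, and Theorem~\ref{prop:spectral-trace} turns this into $\sum_{j}\operatorname{tr}(M_j^l)$. For $l=0$, the identity lives on the variable side, which has $(n_1+r_1)\ell$ nodes. This produces the constant term $\gamma^3(n_1+r_1)\ell$. Substituting these expressions yields~\eqref{eq:N6-spectral}.

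The only step needing care is the bookkeeping in the $l=0$ term: it must be taken as the identity on the variable side, of size $(n_1+r_1)\ell$, not on the check side, of size $r_1\ell$. I would also check that using $H^TH$ rather than $HH^T$ in Lemma~\ref{lem:VN-trace} is consistent for $l\ge1$. A second subtlety is that the constant-$\gamma$ hypothesis requires both column blocks of $H_X$ to have weight $3$, i.e.\ $H_1$ has column weight $3$ and $\operatorname{wt}(b)=3$, as the standing assumption of this subsection guarantees. Once that is noted, everything else is direct substitution.
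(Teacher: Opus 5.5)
Your proposal is correct and follows the paper's own argument. Lemma~\ref{lem:33-triangle}, composed with Lemma~\ref{lem:VN-triangles}, gives the combinatorial equality, and $N_6(G_X)=\tfrac16\operatorname{tr}(A_{VN}^3)-\sum_{c\in W}\binom{\deg(c)}{3}$ is expanded via Lemma~\ref{lem:VN-trace} with $k=3$, with the $l\ge 1$ traces converted by cyclic invariance and Theorem~\ref{prop:spectral-trace} and the $l=0$ term taken as the variable-side identity of size $(n_1+r_1)\ell$, exactly as in the paper's bookkeeping.
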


The bracket in Proposition \ref{prop:33-spectral} equals $\operatorname{tr}(A_{VN}^3)$, so the first term equals $\tfrac16\operatorname{tr}(A_{VN}^3)=\#K_3(G_X^{VN})$; the correction $\sum_{c}\binom{\deg(c)}{3}$ removes the triangles of $G_X^{VN}$ induced by a single check of degree $\geq 3$ (Lemma~\ref{lem:VN-triangles}). For a $(\gamma,k)$-biregular QC-GHP code with $m=r_1\ell$ checks of degree $k$ it equals $r_1\ell\binom{k}{3}$; for the $[\![882,24]\!]$ code this is $441\binom{6}{3}=8820$.
For generalised bicycle codes ($r_1 = n_1 = 1$), each $M_j$ collapses to the scalar $|a(\omega^j)|^2 + |b(\omega^j)|^2$, and~\eqref{eq:N6-spectral} reduces to a polynomial in those scalars together with the constant $\sum_{c}\binom{\deg(c)}{3}=\ell\binom{w_a+w_b}{3}$, computable in $O(\ell \log \ell)$ time via FFT.

The combinatorial half, $$\#(\text{elementary (3, 3)-absorbing sets}) = N_6(G_X),$$ is the Wang-Dolecek-Wesel observation~\cite{wangdolecekwesel2012} in $G^{VN}$ language. The spectral half, the closed-form expression~\eqref{eq:N6-spectral} for $N_6(G_X)$ in terms of the QC-GHP Fourier evaluations $A(\omega^j)$ and $b(\omega^j)$, is the contribution of Section~\ref{sec:spectral-cycles}; the directed-edge-matrix DFT method of Karimi and Banihashemi~\cite{karimi2012} gives an equivalent count for classical QC-LDPC codes, but the explicit two-block QC-GHP form via the $r_1 \times r_1$ matrices $M_j$ does not appear there.

\begin{rem}[Other absorbing-set sizes]
For column-weight-$3$ girth-$6$ codes the smallest elementary absorbing-set types are $(3, 3)$ and $(4, 2)$, see~\cite{wangdolecekwesel2012}. The $G^{VN}$ dictionary handles $(4, 2)$ in principle: an elementary $(4, 2)$-absorbing set corresponds to an induced ``$K_4$ minus an edge'' on four variables in $G^{VN}$, with a degree-$1$ check attached to each of the two vertices of $G^{VN}$-degree~$2$ within the configuration. Counting induced ``$K_4$ minus an edge'' subgraphs requires $\operatorname{tr}(A_{VN}^4)$ together with subgraph-elimination corrections, themselves expressible via the spectral data but tied to the additive $T_3$ machinery of Remark~\ref{rem:6cycles}; we defer this computation to future work. The case $(4, 0)$ corresponds to induced $K_4$ in $G^{VN}$ but is generally rare or absent in the regime relevant to QC-GHP codes (where girth at most $6$ is forced by the CSS commutation constraint \cite{AmirzadePanarioSadeghi2024}), so we do not develop it further.
\end{rem}

\begin{rem}[Comparison with adjacent classical QC-LDPC machinery]\label{rem:higher-Nk-pushes}
This remark compares the spectral-trace identity of Theorem~\ref{prop:spectral-trace} with two adjacent classical pieces of QC-LDPC machinery, both of which yield cycle counts in the QC-GHP setting via slightly different routes: the directed-edge-matrix framework of Karimi-Banihashemi~\cite{karimi2012}, and the moment-recursion of Smarandache-Mitchell~\cite{SM2026}. The two approaches give equivalent answers up to a change of basis, but differ in what they require as input and what their output looks like.
The directed-edge-matrix framework of Karimi and Banihashemi~\cite{karimi2012} gives, for any graph of girth $g$, the clean cycle count $N_k = \operatorname{tr}(A_e^k)/(2k)$ for $k < 2g$, where $A_e$ is the Hashimoto $2|E| \times 2|E|$ directed-edge matrix; no trace-correction terms (cf.\ Lemma~\ref{lem:VN-trace}) are needed. For a QC protograph code over $\mathbb{Z}_\ell$, the matrix $A_e^2$ has block-circulant structure with blocks of size $|E_b| \times |E_b|$ ($|E_b|$ = number of edges of the base graph), and the same DFT block-diagonalisation as Theorem~\ref{prop:spectral-trace} reduces $N_k$ to a sum of traces of $\ell$ small matrices. The construction applies verbatim to the $X$-Tanner graph $G_X$ of any QC-GHP or generalised bicycle code over $\mathbb{Z}_\ell$, yielding closed-form expressions for $N_4, N_6, \ldots, N_{2g-2}$ via Fourier evaluations of base-graph data, on a larger ambient matrix ($2|E|$ vs.\ $r_1 \ell$) but without the lower-cycle correction terms that appear in our adjacency-based identity. We do not develop this route further.

Complementarily, the Smarandache-Mitchell recursion~\cite[Cor.~4]{SM2026} expresses $N_{2j}$ for $2 \leq j \leq 7$ in any $(\gamma, k)$-biregular Tanner graph as a polynomial in the moments $\mu_j \coloneqq \operatorname{Tr}((HH^T)^j)$ and the lower cycle counts $N_4, N_6, \ldots, N_{2j-2}$, under a girth hypothesis that grows with $j$: girth $\geq 4$ for $N_4, N_6$, girth $\geq 6$ for $N_8, N_{10}$, and girth $\geq 8$ for $N_{12}, N_{14}$. By Theorem~\ref{prop:spectral-trace}, $\mu_j = \sum_{j'=0}^{\ell-1}\operatorname{tr}(M_{j'}^j)$ in the QC-GHP setting; combined with the recursion this gives a Fourier expression for $N_{2j}$ in terms of the same Fourier matrices. In particular, the girth-$6$ hypothesis under which the rest of this section operates already suffices for $N_8$ and $N_{10}$, so these cycle counts of the running $[\![882, 24]\!]$ QC-GHP code (girth $6$) are computable in closed form from its Fourier matrices; we do not work out the expressions here. Translating $N_{2j}(G_X)$ into an absorbing-set count beyond the $(3, 3)$ bijection of Proposition~\ref{prop:33-spectral} would require an analogue of Lemma~\ref{lem:33-triangle} for length-$2j$ cycles in $G_X$ (equivalently, $j$-cycles in $G^{VN}$ on $j$ distinct variables under a suitable girth hypothesis); we do not pursue this here.
\end{rem}

\section{Spectral lower bound on the QC-GHP stopping distance}\label{sec:qcghp-stopping-spectral}
Combining the bipartite version of the expander Mixing Lemma \aida{maybe we should add the statement in the spectral preliminaries?} (Haemers~\cite{H1995}; see also \cite[\S 4.3, 4.8]{brouwer2011spectra}) with the Fourier matrices of Theorem~\ref{prop:spectral-trace} yields a spectral lower bound on the QC-GHP stopping distance.

\begin{prop}[Spectral gap bound on stopping-set size in QC-GHP codes]\label{prop:stopping-spectral}
Let $G_X$ be the $X$-Tanner graph of a $(\gamma, k)$-biregular QC-GHP code over $\mathbb{Z}_\ell$, with $m = r_1 \ell$ check nodes and Fourier matrices $\{M_j\}_{j=0}^{\ell - 1}$ from Theorem~\ref{prop:spectral-trace}. The second-largest singular value $\sigma_2(H_X)$ of $H_X$ satisfies
\begin{equation}\label{eq:sigma2-Mj}
    \sigma_2(H_X)^2 \;=\; \max\bigl(\lambda_2(M_0),\,\max_{j \neq 0}\lambda_{\max}(M_j)\bigr),
\end{equation}
where $\lambda_2(M_0)$ is the second-largest eigenvalue of $M_0$. Every non-empty stopping set $S \subseteq V_L$ of size $a$ (Definition~\ref{def:stopping}) satisfies
\begin{equation}\label{eq:stopping-spectral}
    a\!\left(m - \frac{a\gamma}{2}\right) \;\leq\; \left(\frac{\sigma_2(H_X)\, m}{\gamma}\right)^{\!2}.
\end{equation}
Writing $C = (\sigma_2(H_X)\,m/\gamma)^2$ and setting $a_\pm = \bigl(m \pm \sqrt{m^2 - 2\gamma C}\bigr)/\gamma$, inequality~\eqref{eq:stopping-spectral} is informative precisely when $\sigma_2(H_X)^2 < \gamma/2$; in this regime every non-empty stopping set $S$ satisfies $|S| \leq a_-$ or $|S| \geq a_+$, so the stopping distance is bounded below by $a_+$ whenever the small-size range $(0, a_-]$ is excluded by separate (e.g.\ girth- or degree-based) considerations.
\end{prop}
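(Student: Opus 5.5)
The proof splits into two independent parts: identifying $\sigma_2(H_X)$ from the Fourier matrices, and then applying the bipartite expander mixing lemma to a stopping set.

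\emph{Part one: identifying $\sigma_2(H_X)$.} The squared singular values of $H_X$ are the eigenvalues of $H_XH_X^T$. By the unitary block-diagonalisation in the proof of Theorem~\ref{prop:spectral-trace}, $H_XH_X^T$ is unitarily similar to $\bigoplus_j M_j$, so its spectrum, counted with multiplicity, is the multiset union of the spectra of the $M_j$. Biregularity gives $H_XH_X^T\mathbf 1=\gamma k\,\mathbf 1$ on the check side, and $\gamma k$ is the top eigenvalue by Perron--Frobenius (or by row-sum bounds). The vector $\mathbf 1_{r_1\ell}=\mathbf 1_{r_1}\otimes\mathbf 1_\ell$ is invariant under every cyclic shift, so under $U_L=I_{r_1}\otimes F_\ell$ it is supported only on frequency $j=0$. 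Hence $\lambda_{\max}(M_0)=\gamma k=\sigma_1^2$, realised by $\mathbf 1_{r_1}$ (indeed $A(1)A(1)^*\mathbf 1=k_1\gamma_1\mathbf 1$, and the second block adds $|b(1)|^2=\operatorname{wt}(b)^2$). Removing this one eigenvalue from the multiset union leaves exactly the largest of $\lambda_2(M_0)$ and $\lambda_{\max}(M_j)$ for $j\neq 0$, which is~\eqref{eq:sigma2-Mj}. If $\gamma k$ has multiplicity larger than one (disconnected $G_X$), the same bookkeeping still holds as a statement about the multiset.

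\emph{Part two: the stopping-set bound.} I would use the bipartite expander mixing lemma in Haemers' form: for $S\subseteq V_L$ and $T\subseteq V_R$ in a biregular graph with $n$ variable nodes and $m$ check nodes,
\[
  \Bigl|e(S,T)-\tfrac{\gamma|S||T|}{m}\Bigr|\le \sigma_2\sqrt{|S||T|}.
\]
The finer version carries extra factors $(1-|S|/n)(1-|T|/m)$ under the square root, which may be dropped. Apply it with $S$ a stopping set of size $a$ and $T=N(S)$, writing $t=|T|$. Then $e(S,T)=a\gamma$ exactly, since every edge leaving $S$ lands in $N(S)$. The stopping condition says each check in $T$ receives at least two edges, so $t\le a\gamma/2$. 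The mixing lemma then gives $a\gamma(m-t)/m\le\sigma_2\sqrt{at}$, i.e.
\[
  a(m-t)^2/t\le(\sigma_2 m/\gamma)^2.
\]
To reach~\eqref{eq:stopping-spectral} I would use $m-t\ge m-a\gamma/2$ together with $(m-t)/t\ge 1$. This is where I expect the main obstacle: the second inequality needs $t\le m/2$. I would handle the remaining range separately: there $a\gamma/2\ge m/2$, so $a\ge m/\gamma$ already sits near the large root and the statement reduces to the quadratic analysis below. Alternatively I would keep the sharper factor $(1-t/m)$ to absorb that range. Getting this case split clean, rather than the spectral part, is the delicate step.

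\emph{Part three: reading off the quadratic.} Inequality~\eqref{eq:stopping-spectral} says $\tfrac{\gamma}{2}a^2-ma+C\ge 0$, whose roots are $a_\pm=(m\pm\sqrt{m^2-2\gamma C})/\gamma$. It excludes an interval exactly when the discriminant is positive. Substituting $C=\sigma_2^2m^2/\gamma^2$, the condition $m^2>2\gamma C$ becomes $\sigma_2^2<\gamma/2$. In that regime every non-empty stopping set avoids $(a_-,a_+)$. So once the small range $(0,a_-]$ is ruled out by girth or degree arguments, the stopping distance is at least $a_+$.
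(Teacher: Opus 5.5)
Your Part one (identifying $\sigma_2(H_X)^2$ from the $M_j$ via the frequency-$0$ Perron vector) and your Part three (the quadratic in $a$ and the condition $\sigma_2^2<\gamma/2$) match the paper. The gap is in Part two. With the crude mixing bound and the test set $T=N(S)$ you correctly obtain $a(m-t)^2/t\le C$. Converting this to \eqref{eq:stopping-spectral}, however, needs $(m-t)/t\ge 1$, and this fails when $t=|N(S)|>m/2$. In that range your inequality only gives $a(m-t)\le C\,t/(m-t)$, which is weaker than what is needed and becomes vacuous as $t\to m$.

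Your first proposed remedy does not work. Saying that $a\ge m/\gamma$ ``sits near the large root'' and then appealing to the quadratic analysis is circular: that analysis takes \eqref{eq:stopping-spectral} as its hypothesis. Moreover, $a\ge m/\gamma$ by itself gives neither $a\ge a_+$ nor $a(m-a\gamma/2)\le C$.

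Your second remedy does work, and it removes the case split entirely.

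\textbf{Refined lemma with $T=N(S)$.} Keeping the factor $(1-|T|/m)$ in the mixing lemma gives, for $t<m$,
$$a\gamma(m-t)/m\le\sigma_2\sqrt{at(m-t)/m},\qquad\text{i.e.}\qquad a(m-t)\le C\,t/m\le C.$$
Since $t\le a\gamma/2$, we have $m-t\ge m-a\gamma/2$, which gives \eqref{eq:stopping-spectral}. The case $t=m$ is trivial, because then $m-a\gamma/2\le 0$.

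\textbf{The paper's route.} The paper avoids the issue by taking the complementary test set $T=V_R\setminus N(S)$. Then $e(S,T)=0$, and the crude bound reads $\gamma a|T|/m\le\sigma_2\sqrt{a|T|}$, so $a|T|\le C$ directly. Together with $|T|\ge m-a\gamma/2$ this gives \eqref{eq:stopping-spectral} in one line.

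\textbf{How the two choices compare.} Under the refined lemma the two test sets are equivalent. Indeed, $e(S,T)+e(S,V_R\setminus T)=\gamma a$, so the two deviation terms are negatives of each other. Also, the error term $\sqrt{|T|(m-|T|)/m}$ is symmetric under $T\leftrightarrow V_R\setminus T$. With the crude lemma, however, only the complement choice avoids the obstruction you ran into.
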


\begin{proof}
See Figure~\ref{fig:expander-stopping}. By Definition~\ref{def:stopping}, a stopping set $S$ satisfies $|N(S)| \leq a\gamma/2$, since every check in $N(S)$ has at least two neighbours in $S$. For a $(\gamma, k)$-biregular code with $m$ checks and $n$ variables (so $|E| = \gamma n = k m$), the expander mixing lemma applied to the bipartite Tanner graph gives, for any $S \subseteq V_L$ and $T \subseteq V_R$,
\[
    \bigl|\,|E(S, T)| - \tfrac{|E|}{mn}|S||T|\,\bigr| \;\leq\; \sigma_2(H_X) \sqrt{|S||T|}.
\]
Taking $T = V_R \setminus N(S)$ makes $E(S, T) = \emptyset$, so the EMI reduces to $\frac{\gamma a |T|}{m} \leq \sigma_2(H_X)\sqrt{a |T|}$. Squaring and dividing by $a|T| > 0$ yields
\[
    a \, |T| \;\leq\; \left(\frac{\sigma_2(H_X)\, m}{\gamma}\right)^{\!2}.
\]
Substituting $|T| = m - |N(S)| \geq m - a\gamma/2$ gives \eqref{eq:stopping-spectral}. For~\eqref{eq:sigma2-Mj}, the DFT block-diagonalisation of Theorem~\ref{prop:spectral-trace} identifies $\operatorname{spec}(H_X H_X^T)$ with the disjoint union of $\operatorname{spec}(M_j)$ over $j = 0, \ldots, \ell - 1$. The Perron eigenvalue $\sigma_1(H_X)^2 = \gamma k = \lambda_{\max}(M_0)$, and the next-largest eigenvalue in this union is the larger of $\lambda_2(M_0)$ and $\max_{j \neq 0}\lambda_{\max}(M_j)$.
\end{proof}

\begin{rem}[Regime of applicability of Proposition \ref{prop:stopping-spectral}]\label{rem:regime}
The non-vacuity condition $\sigma_2(H_X)^2 < \gamma/2$ is restrictive. For $(\gamma, k)$-biregular bipartite graphs the Alon-Boppana-type lower bound~\cite{fengli1996, lisole1996} forces $\sigma_2(H_X) \geq \sqrt{\gamma - 1} + \sqrt{k - 1} - o(1)$ as $\ell \to \infty$, so for the column-weight-$3$ family of interest (including the $[\![882, 24]\!]$ code of Example~\ref{ex:ghp}, with $\gamma = 3$, $k = 6$) the inequality is structurally vacuous: $\sigma_2^2 \gtrsim (\sqrt{2} + \sqrt{5})^2 \approx 13$ while $\gamma/2 = 1.5$. Proposition~\ref{prop:stopping-spectral} therefore reads as a \emph{framework-level} statement: it shows how the QC-GHP Fourier matrices control the stopping distance \emph{whenever the second singular value can be driven below $\sqrt{\gamma/2}$}, a regime accessible to designs with substantially larger column weight or to Ramanujan-type lifted-product constructions, rather than to the small-$\gamma$ examples of Section~\ref{sec:background}. Closing this gap, either by a Tanner-style multi-eigenvalue tightening or by replacing the EMI step with a more parameter-aware spectral inequality, remains an open direction.
\end{rem}


\begin{figure}[htp]
\centering
\begin{tikzpicture}[thick,
  vIn/.style={circle, draw=black, fill=ugentblue, minimum size=8pt, inner sep=0pt},
  vOut/.style={circle, draw=black, fill=white, minimum size=8pt, inner sep=0pt},
  cN/.style={regular polygon, regular polygon sides=4, draw=black, fill=red!25, minimum size=10pt, inner sep=0pt, very thick},
  cT/.style={regular polygon, regular polygon sides=4, draw=black, fill=white, minimum size=10pt, inner sep=0pt}]
\foreach \k in {1,...,4} {
  \node[vIn] (s\k) at (-2.4, 1.6 - \k*0.6) {};
}
\foreach \k in {5,...,8} {
  \node[vOut] (s\k) at (-2.4, 1.6 - \k*0.6) {};
}
\node[font=\scriptsize, left=2pt] at (-2.4, 1.0) {$S$};
\node[font=\scriptsize, left=2pt] at (-2.4, -1.5) {$V_L \setminus S$};
\foreach \k in {1,2,3} {
  \node[cN] (n\k) at ( 2.4, 1.3 - \k*0.6) {};
}
\foreach \k in {4,...,8} {
  \node[cT] (n\k) at ( 2.4, 1.3 - \k*0.6) {};
}
\node[font=\scriptsize, right=2pt] at ( 2.4, 0.7) {$N(S)$};
\node[font=\scriptsize, right=2pt] at ( 2.4, -1.9) {$T = V_R \setminus N(S)$};
\draw[ugentblue!70] (s1) -- (n1);  \draw[ugentblue!70] (s2) -- (n1);
\draw[ugentblue!70] (s2) -- (n2);  \draw[ugentblue!70] (s3) -- (n2);
\draw[ugentblue!70] (s3) -- (n3);  \draw[ugentblue!70] (s4) -- (n3);
\draw[ugentblue!70] (s1) -- (n3);
\node[red, font=\scriptsize, align=center] at (0, -2.2) {no edges between $S$ and $T$};
\node[align=center, font=\scriptsize] at (0, 1.4) {$|N(S)| \leq a\gamma/2$\\ (each red check sees $\geq 2$ neighbours in $S$)};
\end{tikzpicture}
\caption[The expander mixing argument for stopping sets]{The expander mixing lemma proof of Proposition~\ref{prop:stopping-spectral}. A stopping set $S \subseteq V_L$ (blue, size $a$) has $|N(S)| \leq a\gamma/2$ because every check in $N(S)$ (red squares) sees at least two neighbours in $S$, and the total $a\gamma$ incidences from $S$ are spread over the checks of $N(S)$. The expander mixing lemma is then applied to the empty edge set between $S$ and the test set $T = V_R \setminus N(S)$ of non-neighbour checks: the deviation $|E(S, T)| - |E||S||T|/(mn)$ from the expected count is bounded by $\sigma_2(H_X)\sqrt{|S||T|}$, and since the actual count is $0$ this forces $a$ to satisfy~\eqref{eq:stopping-spectral}.}
\label{fig:expander-stopping}
\end{figure}

\section{Concluding remarks}\label{sec:conclusion}
We have developed a quantum-specific spectral framework for the lifted-product (QC-GHP) and generalised bicycle Tanner graphs of the leading quantum LDPC constructions. The central tool is the DFT block-diagonalisation of $H_X H_X^T$ (Theorem~\ref{prop:spectral-trace}), which exploits the two-block tensor form $H_X = [\,\widetilde{H}_1 \mid I_{r_1}\otimes\widetilde{B}^T\,]$ by absorbing the second block into a scalar shift $|b(\omega^j)|^2 I_{r_1}$ and reduces an $(r_1\ell)\times(r_1\ell)$ moment-trace computation to $\ell$ traces of $r_1\times r_1$ Hermitian matrices, extending the classical single-block reductions of Smarandache--Flanagan~\cite{SF2009} and Karimi--Banihashemi~\cite{karimi2012} to the quantum setting. To illustrate applications of the proposed spectral method, we obtain, for generalised bicycle codes, a $4$-cycle count through additive energies (Proposition~\ref{prop:4cycles}) and a joint Sidon characterisation of girth~$6$ echoing Fossorier's criterion~\cite{fossorier2004} (Corollary~\ref{cor:sidon}); via the Wang--Dolecek--Wesel triangle bijection~\cite{wangdolecekwesel2012}, a closed-form Fourier count of the $(3,3)$ elementary absorbing sets of column-weight-$3$ QC-GHP codes (Proposition~\ref{prop:33-spectral}), addressing a question raised by the framework of Morris, Pllaha, and Kelley~\cite{morris2024}; and, through the bipartite expander mixing lemma applied to the second-largest singular value computed from the same Fourier matrices, a spectral-gap lower bound on stopping-set sizes (Proposition~\ref{prop:stopping-spectral}).

We conclude with a few open problems.

\begin{itemize}
    \item \emph{Higher even-cycle and absorbing-set counts.} Extending the closed-form counts beyond $4$- and $6$-cycles requires the additive $3$-energy ($T_3$) computations and the girth-$8$ criterion noted in Remark~\ref{rem:6cycles}, together with a length-$2j$ analogue of the $(3,3)$-to-triangle bijection in order to translate higher cycle counts into absorbing-set counts.
    \item \emph{Non-vacuous stopping-set bounds.} For the column-weight-$3$ examples studied here the spectral-gap bound is vacuous, since the second singular value exceeds $\sqrt{\gamma/2}$. Identifying QC-GHP design parameters, for instance larger column weight or \linebreak Ramanujan-type lifted-product constructions, under which the bound becomes informative is left for future work, as is a Tanner-style multi-eigenvalue tightening of the expander mixing step.
    \item \emph{Other elementary absorbing-set types.} The $(4,2)$ and $(4,0)$ absorbing-set cases are accessible in principle through the same spectral data, via induced ``$K_4$ minus an edge'' and induced $K_4$ counts in the variable-node graph, but they require the additional subgraph-elimination corrections sketched in Remark~\ref{rem:6cycles}.
\end{itemize}

\paragraph{Acknowledgements.}
Aida Abiad is supported by NWO (Dutch Research Council) through the grant VI.Vidi.213.085. We thank Kirsten Morris for bringing the quantum LDPC construction of Panteleev-Kalachev to our attention.

\printbibliography[heading=bibintoc]

\end{document}